\numberwithin{equation}{section}
\definecolor{darkblue}{rgb}{0,0,0.5}
\newdimen\margin
\def\textno#1&#2\par{
   \margin=\hsize
   \advance\margin by -4\parindent
          \setbox1=\hbox{\sl#1}
   \ifdim\wd1 < \margin
      $$\box1\eqno#2$$
   \else
      \bigbreak
      \hbox to \hsize{\indent$\vcenter{\advance\hsize by -3\parindent
      \it\noindent#1}\hfil#2$}
      \bigbreak
   \fi}
\newtheorem{theorem}[algorithm]{Theorem}
\newtheorem{prop}[algorithm]{Proposition}
\newtheorem{lemma}[algorithm]{Lemma}
\newtheorem{cor}[algorithm]{Corollary}
\theoremstyle{definition}
\newtheorem{defin}[algorithm]{Definition}
\newtheorem{remark}[algorithm]{Remark}
\def\claimproof{\removelastskip\penalty55\medskip\noindent{\em Proof of claim: }}
\newcounter{stepenv}
\newenvironment{stepenv}[1][]{\refstepcounter{stepenv}}{}
\newcounter{step}[stepenv]
\newcounter{substep}[step]
\renewcommand{\thesubstep}{\thestep.\arabic{substep}}
\newcounter{claim}
\newenvironment{claim}[1][]{\refstepcounter{claim}\par\medskip\noindent%
        \textit{Claim~\theclaim. #1} \itshape\rmfamily}{\medskip}
\newcommand{\cF}{\mathcal{F}}
\def\eps{{\varepsilon}}
\newcommand{\prob}[1]{\mathrm{\mathbb{P}}\left[#1\right]}
\newcommand{\expn}[1]{\mathrm{\mathbb{E}}\left[#1\right]}
\def\sm{\setminus}
\newcommand{\Set}[1]{\{#1\}}
\def\In{\subseteq}
\def\COMMENT#1{}
\def\TASK#1{}
\let\TASK=\footnote             % COMMENT OUT for clean output
\let\COMMENT=\footnote          % COMMENT OUT for clean output
\begin{document}

\title{Optimal Hamilton covers and linear arboricity for random graphs}

\author{Nemanja Dragani\'c \thanks{
Department of Mathematics, ETH, Z\"urich, Switzerland. Research supported in part by SNSF grant 200021\_196965.
\emph{E-mails}: \textbf{\{nemanja.draganic,david.munhacanascorreia,benjamin.sudakov\}@math.ethz.ch}.
}
\and Stefan Glock \thanks{Fakultät für Informatik und Mathematik, Universität Passau, Germany.
\emph{Email}: \textbf{stefan.glock@uni-passau.de}}
\and David Munh\'a Correia \footnotemark[1]
\and Benny Sudakov \footnotemark[1]
}

\date{}

\maketitle
\begin{abstract} 
In his seminal 1976 paper, P\'osa showed that for all $p\geq C\log n/n$, the binomial random graph $G(n,p)$ is with high probability Hamiltonian. This leads to the following natural questions, which have been extensively studied: How well is it typically possible to cover all edges of $G(n,p)$ with Hamilton cycles? How many cycles are necessary? In this paper we show that for $ p\geq  C\log n/n$, we can cover $G\sim G(n,p)$ with precisely $\lceil\Delta(G)/2\rceil$ Hamilton cycles. Our result is clearly best possible both in terms of the number of required cycles, and the asymptotics of the edge probability $p$, since it starts working at the weak threshold needed for Hamiltonicity. This resolves a problem of Glebov, Krivelevich and Szabó, and improves upon previous work of Hefetz, K\"uhn, Lapinskas and Osthus, and of Ferber, Kronenberg and Long, essentially closing a long line of research on Hamiltonian packing and covering problems in random graphs.
\end{abstract}

\section{Introduction}
Given two (hyper)graphs $H$ and $G$, can we partition the edges of $G$ into copies of $H$? This is the general framework of graph decomposition problems, which have been extensively studied for various instances of graphs $H$ and $G$. One of the most well-known results of this type was shown by Kirkman in 1847, who proved that the complete graph $K_n$ can be decomposed into copies of $K_3$ if and only if $n \equiv 1, 3 \mod 6$. The extension of this condition to general graphs $F$ instead of $K_3$ remained an unsolved problem for a century until Wilson resolved it in 1975. He showed that for every graph~$F$, if $n$ is large enough and certain (necessary) divisibility conditions are satisfied, then $K_n$ has an $F$-decomposition. In the case of hypergraphs, solving a problem which goes back to the 19th century, Keevash \cite{keevash2014existence} showed that large enough complete hypergraphs can be decomposed into constant sized cliques, if divisibility conditions are satisfied, and the corresponding problem for general hypergraphs $F$ was settled in~\cite{Fdesigns}.

Since it is not always possible to decompose $G$ into copies of $H$, a natural question is how many edge-disjoint copies of $H$ can we \emph{pack} into $G$. To this end, we define an \emph{$H$-packing} as a collection of edge-disjoint copies of $H$ in $G$. Another avenue of study is that of covering problems, where one asks for the minimal number of copies of $H$ needed to \emph{cover} $G$, or in other words, what is the size of a minimal \emph{$H$-cover} in~$G$. Packing and covering problems are closely related, and in some instances are even trivially equivalent. For example, the famous Erd\H{o}s--Hanani problem \cite{erdos1963limit} on packing and covering complete $s$-uniform hypergraphs with $k$-cliques, solved by R\"odl \cite{rodl1985packing}, has the same asymptotic answer for both its covering and packing version. %of the problem. It was solved by R\"odl \cite{rodl1985packing}, before Keevash \cite{keevash2014existence} showed the stronger decomposition result.

In this paper, we are concerned with packings and coverings by Hamilton cycles. One of the oldest results of this flavour is Walecki's theorem from around 1890, stating that $K_{n}$ can be decomposed into Hamilton cycles if $n$ is odd, and into Hamilton cycles plus one perfect matching if $n$ is even. A far-reaching generalization of this result, when $K_n$ is replaced by a regular graph with degree at least $n/2$, was proven by Csaba, K\"uhn, Lo, Osthus and Treglown~\cite{CKLOT:16}, thereby confirming a conjecture of Nash-Williams \cite{nash1970hamiltonian} from 1970. 
Clearly, the bound on the degree is best possible since otherwise the graph might not even contain a Hamilton cycle.
%Kelly conjecture

Another natural graph class to study questions about Hamilton cycles are random graphs. Here, we consider the binomial random graph $G(n,p)$ which has $n$ vertices and edges are chosen to appear independently with probability $p$. We say that an event (or more precisely, a sequence of events) holds \emph{with high probability (whp)} if the probability of the event tends to $1$ as $n\to \infty$. The investigation of packings of Hamilton cycles in random graphs was initiated by Bollob\'as and Frieze \cite{bollobas1983matchings}. In the last decades, significant attention has been directed towards the problem of identifying optimal packings and coverings with Hamilton cycles within random graphs. This attention has resulted in the solution of the packing problem through a series of papers by multiple authors. In particular, it is clear that a packing of Hamilton cycles is of size at most $\lfloor\delta(G)/2\rfloor$; hence, the question becomes interesting for random graphs starting with $p\sim \frac{\log n}{n}$. The results obtained in \cite{bollobas1983matchings, knox2015edge, krivelevich2012optimal, kuhn2014hamilton} cover the whole range of $p$, thus confirming a conjecture of Frieze and Krivelevich~\cite{FK:08}.

\begin{theorem}[\cite{bollobas1983matchings, knox2015edge, krivelevich2012optimal, kuhn2014hamilton}]\label{thm:packing}
For any $p$, in $G\sim G(n,p)$ whp there exists a collection of $\lfloor\delta(G)/2\rfloor$ edge-disjoint  Hamilton cycles.
\end{theorem}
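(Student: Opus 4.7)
The plan is to split by the value of $p$ and combine several techniques. Note first that the bound $\lfloor \delta(G)/2\rfloor$ is the trivial upper bound, since each vertex $v$ lies in at most $\lfloor \dg(v)/2\rfloor$ edge-disjoint Hamilton cycles; so we are matching this trivial bound.

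Dense regime ($p \gg n^{-1+\eps}$). Here whp $G\sim G(n,p)$ is $(1+o(1))pn$-regular and, more importantly, a robust expander in the sense of K\"uhn--Osthus. The approach is to first peel off a near-regular spanning subgraph of tiny degree (a perfect matching or $2$-factor, used to correct parity or small regularity defects) so that the remainder $G'$ has even minimum degree exactly $2k$ with $k=\lfloor \delta(G)/2\rfloor$. Then apply a Hamilton decomposition theorem for regular robust expanders, in the spirit of \cite{CKLOT:16}, to decompose $G'$ into $k$ Hamilton cycles; these remain edge-disjoint in $G$.

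Moderate regime ($C\log n/n \le p \le n^{-1+\eps}$). The graph is still quasi-random but no longer dense enough for regularity methods. Use two-round sprinkling: write $p = p_1 + p_2 - p_1 p_2$ with $p_2 = n^{-1+o(1)}$ and $p_1$ essentially the full $p$, so that $G = G_1 \cup G_2$ with $G_i \sim G(n,p_i)$ independent. Build the $k$ Hamilton cycles iteratively inside $G_1$, using an absorber plus P\'osa rotation-extension to find a nearly-spanning cycle at each step, and then use the unexposed random edges of $G_2$ to ``close up'' or ``repair'' the constructed cycle into a Hamilton cycle. Independence of $G_2$ from the prior construction ensures each step succeeds whp.

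Sparse regime ($p = (1+o(1))\log n/n$). This is the delicate case. Here $\delta(G)$ can be as small as a constant, and the set $S$ of low-degree vertices is sparse and whp has simple structure (an independent set with bounded codegrees). The strategy is: (i) reveal a small random subgraph $G_2$ via sprinkling that contains every edge incident to $S$ but is otherwise tiny; (ii) carefully pre-assign each edge incident to a vertex of $S$ to one of the $k$ Hamilton cycles, respecting the exact degree constraint at each such vertex; (iii) complete each partial cycle outside $S$ using P\'osa rotations inside the large expanding graph $G_1 = G\setminus G_2$, exploiting the fact that $G(n,p)$ remains a good expander after iterated deletion of a few previously constructed Hamilton cycles. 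The main obstacle is precisely this last regime: simultaneously saturating the degree constraints at the low-degree vertices \emph{and} maintaining enough expansion in the remainder after $k-1$ cycles have been removed is the technical heart of the Knox--K\"uhn--Osthus and Krivelevich arguments, and requires the full strength of the P\'osa rotation--extension method combined with careful conditioning on the revealed randomness.
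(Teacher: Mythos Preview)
This theorem is not proved in the paper at all: it is quoted from the literature (the combined results of \cite{bollobas1983matchings,knox2015edge,krivelevich2012optimal,kuhn2014hamilton}) and used as a black box in the proof of the main result. There is therefore no ``paper's own proof'' to compare your proposal against.

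As a high-level survey of how those four cited papers together cover the full range of $p$, your outline is broadly accurate: \cite{kuhn2014hamilton} handles the dense regime via Hamilton decompositions of regular robust expanders, \cite{knox2015edge} and \cite{krivelevich2012optimal} treat the intermediate and sparse regimes with sprinkling and P\'osa rotation--extension, and \cite{bollobas1983matchings} is the original result for constant $\delta$. That said, what you have written is a narrative sketch rather than a proof. Several of the steps you describe as routine are in fact the substantial content of entire papers (for instance, ``peel off a near-regular spanning subgraph \dots\ so that the remainder has even minimum degree exactly $2k$'' hides a nontrivial argument, and your sparse-regime step (iii) is essentially the whole of \cite{krivelevich2012optimal}). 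If your intent is simply to invoke the theorem as the present paper does, a citation suffices; if your intent is to reprove it, the current sketch does not contain enough detail to be checkable at any of the three regimes.
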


\noindent In contrast to the Erd\H{o}s--Hanani problem where the target graph for packing $H$ remains small compared to~$G$, the situation changes when we permit $H$ to grow in size with $G$. This leads to a departure from the equivalence between the (asymptotic) packing and covering problems. Indeed, Kuzjurin~\cite{kuzjurin:95} showed that asymptotically optimal packings exist only when the clique size is of order less than $\sqrt{n}$, while asymptotically optimal coverings exist up to clique size $o(n)$.

Given the success of the packing problem, it was natural to consider the ``dual'' question of covering $G(n,p)$ with a small number of Hamilton cycles. 
Somewhat surprisingly, the covering question has withstood the test of time to a greater extent than its corresponding packing version.
The covering prolem was initiated by Glebov, Krivelevich and Szab\'o~\cite{GKS:14}, who showed that whp $(1+o(1))np/2$ Hamilton cycles suffice to cover $G\sim G(n,p)$, whenever $p\geq n^{-1+\varepsilon}$ for any constant $\varepsilon>0$. They conjectured that the optimal number of $\lceil\Delta(G)/2\rceil$ Hamilton cycles can be attained and that further, this could be done whenever $p= \omega \left(\frac{\log n}{n} \right)$. Further, this question is the second problem listed in the well-known bibliography on Hamilton cycles in random graphs by Frieze~\cite{frieze2019hamilton}.

The result of Glebov,
Krivelevich and Szab\'o was subsequently improved by Hefetz, K\"uhn, Lapinskas and Osthus~\cite{HKLO:14}, who showed that for $\frac{\log^{117}n}{n}\leq p\leq 1-n^{-1/8}$ one can whp cover $G\sim G(n,p)$ with $\lceil\Delta(G)/2\rceil$ Hamilton cycles. Later, Ferber, Kronenberg and Long~\cite{ferber2017packing} were able to improve the range of $p$ for which the approximate covering result holds, showing that when $p\gg \frac{\log^2n}{n}$ then whp $G\sim G(n,p)$ can be covered with $(1+o(1))np/2$ Hamilton cycles. %The final range to be solved in the packing question was $\frac{C\log n}{n} \leq p \leq \log^Cn$. In order to achieve closure in the covering question as well, the same range proved to be the last one to be resolved.
We resolve this problem by proving that the conjecture of Glebov, Krivelevich and Szab\'o is true in the most interesting range, starting at the (weak) threshold for Hamiltonicity.

%One of the most famous examples is the Erdos-Hanani problem

% Hamiltonicity is one of the central notions in combinatorics, and has been extensively studied by many researchers. Since the problem of determining whether a graph is Hamiltonian is NP-complete, a important theme in Combinatorics is to derive sufficient conditions for this property. Once such a property is derived, the next question usually is the "robustness" of the Hamiltonicity of the graph.

% This paragraph about resilience, pancyclicity, counting, packing.

% Another well studied problem is that of covering the edges of a random graph with Hamilton cycles. It was raised by Glebov, Krivelevich and Szabó, who showed that for $\leq p\leq $ the binomial random graph $G(n,p)$ can be covered by at most $(1+o(1))np$ Hamilton cycles. They raised..

%Given a graph $G$, a Hamilton cover of $G$ is a set of Hamilton cycles in $G$ which together cover all edges of~$G$. It is optimal if it has size $\lceil \frac{\Delta(G)}{2}\rceil$.
\begin{theorem}\label{thm:main}
Let $\frac{C\log n}{n} \leq p(n) \leq n^{-2/3}$ for large enough $C$. Then whp $G(n,p)$ can be covered by $\lceil\Delta(G)/2\rceil$ Hamilton cycles.
\end{theorem}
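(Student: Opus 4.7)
My plan is to combine the near-optimal edge-disjoint packing guaranteed by Theorem~\ref{thm:packing} with a careful absorption/rotation-extension argument, using a small random sparsifier as a reservoir to cover the few remaining edges. The crucial numerical observation is that whp $\Delta(G)-\delta(G)=O(\sqrt{np\log n})$, so Theorem~\ref{thm:packing} already produces $\lfloor\delta(G)/2\rfloor$ edge-disjoint Hamilton cycles, and we only need $\lceil\Delta(G)/2\rceil-\lfloor\delta(G)/2\rfloor=O(\sqrt{np\log n})$ extra cycles to cover the remaining roughly $O(n\sqrt{np\log n})$ uncovered edges.

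First, I would expose $G\sim G(n,p)$ in two rounds, writing $G=G_0\cup R$ where $R$ is a random sparsifier of density $\Theta(\log n/n)$, so that $G_0\sim G(n,p_0)$ with $p_0\geq C'\log n/n$ is edge-disjoint from $R$. Standard concentration shows that whp $G$, $G_0$ and $R$ all have the expected degree concentration and quasi-random structure; in particular, the set $M$ of ``trouble'' vertices (those of degree within $\omega(1)$ of $\Delta(G)$) has at most $n^{o(1)}$ elements, and $R$ enjoys robust P\'osa-type expansion that survives the deletion of any $o(|E(R)|)$ edges. Then apply Theorem~\ref{thm:packing} inside $G_0$ to obtain $\lfloor\delta(G_0)/2\rfloor$ edge-disjoint Hamilton cycles, leaving a leftover set $L\In E(G_0)\cup E(R)$ of size $O(n\sqrt{np\log n})$.

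Finally, I would construct the remaining Hamilton cycles one by one. In each round, pick a linear forest $F\In L$ respecting the residual degree budget $2k-d_v$ at every vertex $v$ (where $k=\lceil\Delta(G)/2\rceil$), and extend $F$ to a Hamilton cycle using edges of $R$ by the P\'osa rotation-extension technique. The main obstacle, and the place where the most care is needed, is the extreme tightness at vertices of $M$: such vertices have essentially no slack, so every incident edge must be routed into exactly the right cycle. I would handle this by pre-assigning, via a randomised edge-colouring carried out before the packing step, the edges at $M$-vertices to specific Hamilton cycles, and then enforce these assignments as boundary conditions throughout the absorption. Ensuring simultaneously (i) that every leftover edge, including those at $M$-vertices, is absorbed into the correct cycle, (ii) that no Hamilton cycle uses more than two edges at any vertex, and (iii) that $R$ retains enough unused expansion to close each successive Hamilton cycle, is the delicate technical core of the argument, requiring a tight interplay between the pre-assignment, the packing structure, and the robustness of expansion in $R$.
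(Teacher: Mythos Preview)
Your high-level plan matches the paper's: apply Theorem~\ref{thm:packing} to obtain $\lfloor\delta(G)/2\rfloor$ edge-disjoint Hamilton cycles, then cover the leftover graph $L$ (which has $\Delta(L)=\Theta(\sqrt{np\log n})$) with $\lceil\Delta(L)/2\rceil$ further Hamilton cycles, each obtained by extending a linear forest. The two-round exposure with a sparse reservoir $R$ is unnecessary and in fact counterproductive: since we are \emph{covering}, every edge of $G$ may be reused when extending a linear forest (this is exactly what Lemma~\ref{lem:linforestextension} does), and the relevant extendability condition is that every vertex has $\Omega(np)$ neighbours outside $V(F)$ --- something a $\Theta(\log n/n)$-density reservoir cannot provide on its own.

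The real gap is in your handling of the near-maximum-degree vertices. You propose a ``randomised edge-colouring carried out before the packing step'' to pre-assign edges at $M$-vertices to specific Hamilton cycles, but Theorem~\ref{thm:packing} is a black box: one cannot force the packing to respect any such pre-assignment, so after packing the leftover $L$ at an $M$-vertex is simply whatever it is. At a vertex $v$ with $d_L(v)=\Delta(L)$, each of the $\lceil\Delta(L)/2\rceil$ subsequent Hamilton cycles must use exactly two $L$-edges at $v$; your iterative scheme (``pick a linear forest respecting the residual degree budget and extend it'') gives no argument that a forest simultaneously meeting this exact degree requirement at \emph{every} such vertex and satisfying the extendability hypothesis exists at every step --- this is precisely the ``delicate technical core'' you flag but do not resolve. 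The paper's solution is to build the entire decomposition of $L$ upfront rather than iteratively: a K\H{o}nig--Hall argument (Lemma~\ref{lem:easylinarb}) decomposes all edges touching the small set $B$ of high-$L$-degree vertices into exactly $\lceil\Delta(L)/2\rceil$ linear forests, the approximate linear arboricity theorem handles $L-B$ (applied after a random vertex partition to guarantee reservoirs outside each piece), and the two collections are then merged. That upfront, structure-aware decomposition is the missing ingredient in your proposal.
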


%One can use our methods to cover almost the whole range for $p$,\textcolor{red}{sure about this?} but since the remainder of the range is already covered in \cite{HKLO:14}, for clarity of presentation we chose to present the proof in this shorter range.

\noindent Our result has an immediate corollary concerning the linear arboricity conjecture, which states that every graph $G$ can be decomposed into $\lceil \frac{\Delta(G)+1}{2}\rceil$ linear forests (see~\cite{linarbconj}).
%Recall that a \emph{linear forest} is collection of vertex-disjoint paths. The conjecture is still wide open.
At the moment the best result towards this is by Lang and Postle~\cite{LangPostle} who showed that $G$ can be decomposed into at most $\frac{\Delta}{2} + \Tilde{O} \left(\sqrt{\Delta} \right)$ linear forests. Since a full resolution is currently out of sight, it is natural to consider proving the linear arboricity conjecture for some interesting classes of graphs, such as random graphs. Building on the method of Alon~\cite{alon1988linear} for the linear arboricity conjecture, McDiarmid and Reed~\cite{arboricity-random-regular} proved it for the class of random regular graphs. In~\cite{GKO:16}, the second author, K\"uhn and Osthus use the aforementioned covering result of Hefetz, K\"uhn, Lapinskas and Osthus~\cite{HKLO:14} to show that whp, the linear arboricity conjecture holds for $G(n,p)$, when $p\ge \frac{\log^{117} n}{n}$. Using Theorem~\ref{thm:main}, we can extend the range of $p$ to the same as in Theorem~\ref{thm:main}. Since the deduction is the same as in~\cite{GKO:16}, we omit the details. 

\begin{cor}
In the same range as above, with high probability $G\sim G(n,p)$ can be covered by $\lceil \frac{\Delta(G)}{2}\rceil$ linear forests.
\end{cor}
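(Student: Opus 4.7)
The plan is to deduce this from Theorem~\ref{thm:main} via the reduction used in~\cite{GKO:16}. First, apply Theorem~\ref{thm:main} to obtain Hamilton cycles $H_1, \dots, H_k$ with $k \coloneqq \lceil \Delta(G)/2\rceil$ covering $G$. Since a Hamilton cycle becomes a Hamilton path---and hence a linear forest---upon removing a single edge, the natural plan is to pick an edge $e_i \in H_i$ for each $i$ and consider the Hamilton paths $P_i \coloneqq H_i - e_i$. These $k$ linear forests cover every edge of $G$ except possibly those in $\{e_1,\dots,e_k\}$, so it suffices to choose the $e_i$'s so that each is itself covered by some $P_j$ with $j\neq i$.

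Concretely, we want pairwise distinct edges $e_1,\dots,e_k$ such that $e_i \in H_i \cap H_{\sigma(i)}$ for some $\sigma(i)\neq i$. Given such a selection, any $e \in E(G)$ is covered: if $e \notin \{e_1, \dots, e_k\}$, then $e \in P_i$ for any $i$ with $e \in H_i$; if $e = e_j$, then $e \in H_{\sigma(j)}$ and $e \neq e_{\sigma(j)}$ by distinctness, so $e \in P_{\sigma(j)}$. The existence of such a selection is a system of distinct representatives problem, which can be handled by Hall's theorem provided enough shared edges are distributed across the cycles.

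The key input, specific to random graphs in our range of $p$, is that the cycle cover has a large surplus of shared edges. A direct count gives $\sum_{i=1}^k |E(H_i)| = kn$, while $|E(G)| = (1+o(1))n^2 p/2$ whp, and since typically $\Delta(G) \geq np + \Omega(\sqrt{np\log n})$, the excess $kn - |E(G)| = \Omega(n \sqrt{np \log n})$ is asymptotically much larger than $k \approx np/2$. The main obstacle is to verify that this surplus is well-distributed across the cycles so that Hall's condition holds on every subfamily. Since this combinatorial argument is identical to the one in~\cite{GKO:16}, we omit the details.
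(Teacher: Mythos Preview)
Your proposal is correct and follows the same route as the paper: both defer the deduction to~\cite{GKO:16}. You go somewhat further than the paper by sketching the actual reduction---delete one edge $e_i$ from each Hamilton cycle $H_i$ so that the resulting Hamilton paths still cover $G$, and phrase the choice of the $e_i$ as a system of distinct representatives handled via Hall's theorem---which is indeed the argument in~\cite{GKO:16}. One small caution: the global surplus count $kn - |E(G)| = \Omega(n\sqrt{np\log n}) \gg k$ does not by itself verify Hall's condition for small subfamilies (in particular, one still needs that each individual cycle contains at least one multiply-covered edge), so it is right that you defer this step rather than claim it follows immediately from the count.
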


%\proof
%Add a new vertex with random edges and apply the theorem. 
%\endproof

\noindent Note that the number of linear forests required in this result is even slightly less than in the linear arboricity conjecture. In general, the $+1$ is needed, for instance if $G$ is regular.

\section{Outline}\label{sec:sketch}

In this section, we briefly sketch our approach. %On a high level, it is similar to the approaches of the previous works.
Since the packing problem is already solved, we might assume that $G\sim G(n,p)$ has a collection of $\lfloor\delta(G)/2\rfloor$ edge-disjoint Hamilton cycles. Let $L$ be the graph consisting of those edges which are not covered by these cycles. Then our goal is now to cover $L$ with $\lceil \Delta(L)/2 \rceil$ Hamilton cycles, where we are allowed to reuse some edges of~$G-L$. Note here that $L$ is much sparser than $G$. Indeed, $G$ has average degree $\Theta(np)$, whereas $\Delta(L)=O(\sqrt{np\log n})$. Yet, it is still too expensive to cover every single edge of $L$ with a new Hamilton cycle (which would be possible using standard tools). Hence, a natural strategy to proceed is to split $L$ into few pieces, and extend each piece to a Hamilton cycle. Since every proper subgraph of a Hamilton cycle is a linear forest, it makes sense to take those as the ``pieces'' we wish to split $L$ into. Clearly, not every linear forest can be extended to a Hamilton cycle, for instance if some end vertices of paths have all their neighbours in the interior of other paths. Hence, what we actually need is a decomposition of $L$ into ``well-behaved'' linear forests, and we will prove that every ``well-behaved'' linear forest can indeed be extended to a Hamilton cycle (see Lemma~\ref{lem:linforestextension}).
Note that with this result, one can already answer the approximate version of the problem, since one can simply split $L$ into $O(\Delta(L))=o(pn)$ matchings. By splitting each such matching randomly into a constant number of smaller matchings, one obtains a collection of well-behaved linear forests, and each of them can be extended into a Hamilton cycle. Hence, together with the packing of $G-L$, this covering uses $(1+o(1))np/2$ Hamilton cycles.

To get the exact bound $\lceil \Delta(G)/2\rceil$, the decomposition of $L$ into linear forests needs a more delicate argument. 
We let $B$ denote the set of vertices which are close to having maximum degree in~$L$. Crucially, this set is rather small, and all vertices have only few neighbours inside~$B$. This allows us to cover all edges incident with $B$ with the optimal number of $\lceil \Delta(L)/2 \rceil$ linear forests (see Lemma~\ref{lem:easylinarb}), which uses a K\H{o}nig--Hall type argument. The maximum degree of $L-B$ is significantly smaller, thus we can decompose it into linear forests using approximate versions of the linear arboricity conjecture. We then show that these two collections of linear forests can be ``merged'' to obtain a single collection of $\lceil \Delta(L)/2 \rceil$ linear forests which decompose~$L$. In fact, recall that we need ``well-behaved'' linear forests. To achieve this, we will actually partition $L-B$ into a large but constant number of subgraphs and apply the approximate version of the linear arboricity conjecture to each subgraph individually.

\section{Preliminaries}
In this section we collect several results in graph theory and random graph theory, whose proofs rely on standard arguments. Before that, we give some definitions and notation.

\subsection{Notation and definitions}
Throughout the paper we use standard graph theoretic notation. Given a graph $G$ we denote by $V(G)$ its vertex set and by $E(G)$ its edge set. Given a subset $S\subseteq V(G)$, we denote by $G[S]$ the subgraph of $G$ induced by $S$ and by $N_G(S)$ the external neighbourhood of $S$ in $G$ (omitting the subscript whenever it is unambiguous). For subsets $S_1,S_2\subseteq V(G)$ we denote by $e_G(S_1,S_2)$ the number of edges with one endpoint in $S_1$ and the other in $S_2$. For a vertex $v\in V(G)$, we denote by $\partial_G(v)$ the set of edges incident to $v$ in $G$. A \emph{cherry} is a path of length 2, whose vertex of degree 2 we call the center of the cherry. A \emph{cherry-matching} is a collection of vertex-disjoint cherries. We denote by $\delta(G)$ and $\Delta(G)$ the minimum and maximum degree of $G$, respectively. A \emph{linear forest} is a graph which consists of a disjoint union of paths. A Hamilton cycle in a graph $G$ is a cycle which contains every vertex in $G$. We let $G(n, p)$ denote the binomial random graph on $n$ vertices, where each edge is included independently with probability $p$.
\subsection{Auxiliary results about linear forests}
%\begin{lem}[Chernoff's bound - see e.g., \cite{alon2016probabilistic}]\label{lem:chernoff}
%Let $X$ be the sum of independent random variables $X_1, \ldots, X_n$ such that $0 \leq X_i \leq k$ for each~$i$. Then, for all $0 < \eps < 1$,
%$$\mathbb{P} \left( X < (1-\eps)\mathbb{E}[X]| \right), \mathbb{P} \left( X > (1+\eps)\mathbb{E}[X]|\right) \leq  e^{-\eps^2 \mathbb{E}[X]/3k^2}.$$
%\end{lem}
We start with a simple lemma which allows us to extend a linear forest by edges of a low maximum degree graph.
\begin{lemma}\label{lem:merginglinfor}
Let $G, H$ be two graphs such that every vertex $v \in V(H)$ has degree at most $d$ in both $H$ and $G$. Then, if there is a collection $\mathcal{F}$ of at least $4d+1$ linear forests $F$ covering the edges of $G$, then there is a collection of $|\mathcal{F}|$ linear forests covering the edges of $G \cup H$.
\end{lemma}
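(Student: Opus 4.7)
The plan is to process the edges of $H$ one at a time, greedily adding each to some forest in $\mathcal{F}$ that can still absorb it while remaining a linear forest. A useful invariant throughout the procedure is that the union of all forests in $\mathcal{F}$ remains a subgraph of $G \cup H$, so for every $v \in V(H)$ we have $\sum_{F \in \mathcal{F}} \dg_F(v) \leq \dg_{G \cup H}(v) \leq 2d$ at every step.

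To place a given edge $e = uv \in E(H)$, I would look for a forest $F \in \mathcal{F}$ such that $F + e$ is still a linear forest. This can fail only if (i)~$\dg_F(u) = 2$, (ii)~$\dg_F(v) = 2$, or (iii)~$u$ and $v$ lie on a common path of $F$, in which case adding $e$ closes a cycle. By the degree-sum invariant above, at most $d$ forests satisfy (i), and similarly at most $d$ satisfy (ii). For (iii), the key observation is that $u$ and $v$ sharing a path forces $\dg_F(u) \geq 1$, and the same invariant gives at most $2d$ forests with $\dg_F(u) \geq 1$. Altogether, at most $4d$ forests are obstructed, and since $|\mathcal{F}| \geq 4d + 1$, a valid $F$ always exists, so the greedy procedure succeeds.

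The only mildly subtle point is the bookkeeping for obstruction (iii); once one recognises that a shared path pins down $\dg_F(u) \geq 1$, the whole argument reduces to the same degree-sum bound applied three times, and everything else is routine.
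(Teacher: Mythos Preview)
Your proof is correct and follows essentially the same greedy strategy as the paper. The paper's argument is marginally slicker: it simply observes that at most $2d$ forests have $u$ non-isolated and at most $2d$ have $v$ non-isolated, so some forest has both $u$ and $v$ isolated and can absorb the edge---this sidesteps your separate case analysis of obstructions (i)--(iii), though both arrive at the same $4d$ bound.
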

\begin{proof}
We add the edges of $H$ one by one to linear forests in $\mathcal{F}$. Suppose we added $i<e(H)$ edges from $H$ to linear forests in $\mathcal{F}$, and consider an edge $e=(u,v)$ which we did not add yet. Since $\Delta(G\cup H)\leq 2d$, the number of linear forests in which $u$ is not isolated is at most $2d$, and the same holds for $v$. Therefore, there is a linear forest in which both $u$ and $v$ are isolated, so we can add $e$ to that forest. This completes the proof.
\end{proof}

\noindent The following is a simple corollary of Hall's marriage theorem.
\begin{lemma}
\label{thm:hall}
Let $G$ be a bipartite graph with bipartition $V(G)=A\cup B$ such that for all $a \in A, b \in B$ we have $d(b) \geq 2d(a)$. Then, $G$ has a collection of disjoint cherries such that their centers cover $B$.
\end{lemma}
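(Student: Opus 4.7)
The plan is to reduce the statement to the classical Hall's marriage theorem applied to a ``doubled'' version of $B$. Concretely, I would construct an auxiliary bipartite graph $H$ with parts $A$ and $B' := B \times \{1,2\}$, in which each copy $(b,i) \in B'$ has the same neighborhood in $A$ as $b$ does in $G$. A matching of $H$ saturating $B'$ then assigns to each $b \in B$ two distinct $G$-neighbors $a_1, a_2 \in A$, with distinctness across different $b$'s automatic from the matching property; this is precisely a collection of vertex-disjoint cherries whose centers cover $B$.

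It therefore suffices to verify Hall's condition for $B'$ in $H$. Given $T \subseteq B'$, let $S \subseteq B$ be its projection, so $|T| \leq 2|S|$ and $N_H(T) = N_G(S)$. The key step is a double count of the edges between $S$ and $N_G(S)$: on one hand, the hypothesis $d_G(b) \geq 2 d_G(a)$ for all $a \in A$ and $b \in B$ gives
\[
e_G(S, N_G(S)) = \sum_{b \in S} d_G(b) \;\geq\; 2|S| \cdot \max_{a \in A} d_G(a),
\]
while on the other hand this same edge count is trivially at most $|N_G(S)| \cdot \max_{a \in A} d_G(a)$. Dividing by the common positive factor (the degenerate case $\max_a d_G(a) = 0$ forces $G$ to be edgeless, in which case there is nothing to prove) yields $|N_G(S)| \geq 2|S| \geq |T|$, as required. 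Applying Hall's theorem then produces the desired matching and hence the desired cherry collection.

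I do not anticipate any genuine obstacle here: once the doubling trick is set up, the argument reduces to a one-line double count leveraging the factor of $2$ in the degree assumption. The only mild care needed is the trivial boundary case with no edges.
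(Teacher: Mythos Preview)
Your proposal is correct and takes essentially the same approach as the paper: both construct the auxiliary bipartite graph $H$ by duplicating $B$, verify Hall's condition for $B'$ via the degree inequality $d_H(a)=2d_G(a)\le d_G(b)=d_H(b)$ (you spell out the double count, the paper leaves it implicit), and then read off the cherries from a $B'$-saturating matching. The only cosmetic difference is that the paper phrases the count directly in $H$, whereas you project to $G$; the edgeless degenerate case is equally moot in both since in the paper's application all degrees in $B$ are positive.
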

\begin{proof}
Consider the auxiliary bipartite graph $H$ with parts $A,B'$, where $B'$ consists of two disjoint copies of $B$, and $(a,b)$ is an edge if it corresponds to an edge in $G$. Clearly, $d_H(a) =2d_G(a) \leq d_H(b)$ for all $a\in A$ and $b\in B'$. Hence, $|N_H(S)|\geq |S|$ for all $S\subseteq B'$, so Hall's condition is satisfied, thus there is a matching covering $B'$ in $H$. This clearly gives the collection of desired cherries in $G$.
\end{proof}

\noindent The following is the classic theorem of K\H{o}nig.
\begin{theorem}[K\H{o}nig's theorem]\label{thm:konig}
Let $G$ be a bipartite graph with maximum degree $\Delta$. Then, $G$ decomposes into $\Delta$ matchings.
\end{theorem}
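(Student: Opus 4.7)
The plan is to reduce the general statement to the $\Delta$-regular case and then iteratively peel off perfect matchings using Hall's theorem, which I may freely invoke since it is used in Lemma~\ref{thm:hall} above.

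First, I would embed $G$ into a $\Delta$-regular bipartite multigraph $G'\supseteq G$. Letting $G$ have bipartition $A\cup B$, I first add isolated vertices to the smaller side until $|A|=|B|$. Since both sides have maximum degree at most $\Delta$, the ``deficiency'' identity
\[
\sum_{a\in A}(\Delta-d(a))\;=\;\Delta|A|-e(G)\;=\;\Delta|B|-e(G)\;=\;\sum_{b\in B}(\Delta-d(b))
\]
ensures that whenever some $a\in A$ has $d(a)<\Delta$, there is some $b\in B$ with $d(b)<\Delta$ as well. I can therefore greedily add edges (possibly parallel to existing ones) between such pairs, each addition decreasing the total deficiency by $2$, until every vertex has degree exactly $\Delta$. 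This yields the desired $\Delta$-regular bipartite multigraph $G'$.

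Next, I would show by Hall's theorem that $G'$ has a perfect matching. For any $S\subseteq A$ there are $\Delta|S|$ edge-ends in $S$, all landing in $N(S)$; since each vertex of $N(S)$ has degree exactly $\Delta$, we deduce $|N(S)|\geq |S|$, so Hall's condition holds and a perfect matching $M$ exists in $G'$. Removing $M$ from $G'$ leaves a $(\Delta-1)$-regular bipartite multigraph, which by induction on $\Delta$ (the base case $\Delta=0$ being trivial) decomposes into $\Delta-1$ matchings. Together with $M$, this gives $\Delta$ matchings covering $E(G')$, and restricting each to $E(G)$ produces the required decomposition of $G$ into $\Delta$ matchings.

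The only potentially delicate point in this plan is the embedding step, since one must verify that the greedy completion to a $\Delta$-regular multigraph is always possible; the deficiency identity above reduces this to a one-line check. Everything else is a clean induction driven by Hall's theorem, so no real obstacle is expected.
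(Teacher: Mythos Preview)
Your proof is correct and follows the standard textbook argument for K\H{o}nig's theorem via regularization and Hall's theorem. However, note that the paper does not actually prove this statement: it is merely stated as ``the classic theorem of K\H{o}nig'' and used as a black box in the proof of Lemma~\ref{lem:easylinarb}. So there is no proof in the paper to compare against, but what you wrote is a valid self-contained justification should one be desired.
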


\noindent We now combine the above results to prove the following tool, which will allow us to cover the vertices of highest degree in an optimal way with linear forests.

\begin{lemma}\label{lem:easylinarb}
Let $G$ be a graph with a partition $A,B$ of its vertices such that all vertices have degree at most $\Delta(G)/100$ into $B$ and $A$ is an independent set. Then, the edges of $G$ can be decomposed into $\lceil \Delta(G)/2 \rceil$ linear forests.
\end{lemma}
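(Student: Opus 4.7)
Let $\Delta := \Delta(G)$ and $k := \lceil \Delta/2 \rceil$. The plan is to iteratively peel off cherry-matchings (each a linear forest) from $G$ using Lemma~\ref{thm:hall}, and then decompose the sparse residual graph using K\"onig's theorem (Theorem~\ref{thm:konig}) combined with a Hall-type slot-assignment argument to handle the $B$-$B$ edges.

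Set $G_0 := G$. At iteration $j \geq 1$, let $B_j^* := \{b \in B : d_{G_{j-1}}(b) \geq 3\Delta/100\}$ and let $H_j \subseteq G_{j-1}$ be the bipartite subgraph between $A$ and $B_j^*$ consisting only of $A$-$B$ edges. For $a \in A$, $d_{H_j}(a) \leq d_G(a) \leq \Delta/100$, while for $b \in B_j^*$, since $d_B(b) \leq \Delta/100$ throughout, $d_{H_j}(b) \geq d_{G_{j-1}}(b) - \Delta/100 \geq 2\Delta/100$. Thus $d_{H_j}(b) \geq 2 d_{H_j}(a)$ for all relevant pairs, and Lemma~\ref{thm:hall} yields a cherry-matching $L_j$ in $H_j$ whose centers cover $B_j^*$; being a disjoint union of cherries, $L_j$ is a linear forest. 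Setting $G_j := G_{j-1} \setminus L_j$ reduces $d(b)$ by exactly $2$ for each $b \in B_j^*$, so the process terminates after $J \leq \lceil (\Delta - 3\Delta/100)/2 \rceil = k - \lceil 3\Delta/200 \rceil + O(1)$ iterations, at which point every $b \in B$ has $d_{G_J}(b) < 3\Delta/100$.

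It then remains to decompose the residual $G_J$ (max degree $< 3\Delta/100$, still with $A$ independent and $d_B(v) \leq \Delta/100$) into the remaining $k - J \geq \lceil 3\Delta/200 \rceil$ linear forests. I apply Theorem~\ref{thm:konig} to the bipartite part $G_J[A,B]$ to obtain its proper edge colouring into at most $\lceil 3\Delta/100 \rceil$ matchings, pair these matchings to form bipartite subgraphs of max degree $2$, and break any resulting cycles by removing a single edge per cycle. Since every $a$-vertex has degree at most $\Delta/100$ in $G_J$, each $a$ lies in at most $\Delta/200$ such cycles, so the subgraph of broken edges has low max degree. The broken edges together with the $G_J[B]$-edges (of max degree $\leq \Delta/100$) are then absorbed into the newly formed linear forests by a Hall-type slot-assignment argument, exploiting the fact that every $b$'s residual $A$-degree is strictly less than $3\Delta/100$, so each $b$ has enough linear-forest slots containing at most one $A$-$B$ edge to accommodate all of its $B$-$B$ edges (in even-parity counting, $\geq \lceil d_B(b)/2 \rceil$ such slots exist).

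The main obstacle will be the tight counting: the total number of linear forests must equal $\lceil \Delta/2 \rceil$ exactly rather than $\lceil \Delta/2 \rceil + O(1)$. This hinges on the slack provided by the constant $1/100$ in the hypothesis, careful handling of parity when pairing matchings of the residual, and a Hall-type verification that each $B$-$B$ edge $(b,b')$ can simultaneously be placed in a linear-forest slot where both $b$ and $b'$ have room and the placement creates no cycle.
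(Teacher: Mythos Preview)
Your cherry-peeling phase is essentially sound and mirrors the paper's use of Lemma~\ref{thm:hall}, but the treatment of the residual $G_J$ is where the argument breaks down. You reduce to decomposing a graph with $\Delta(G_J)$ just below $3\Delta/100$ into roughly $3\Delta/200$ linear forests, which sits exactly at the linear-arboricity threshold, and then defer the hard part to an unspecified ``Hall-type slot-assignment argument.'' Two concrete problems: (i) pairing K\H{o}nig matchings of $G_J[A,B]$ produces even cycles that must be broken and the broken edges reabsorbed; (ii) more seriously, absorbing each $B$--$B$ edge $bb'$ requires a forest in which \emph{both} $b$ and $b'$ still have capacity and adding $bb'$ creates no cycle. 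Your parity count only shows each vertex individually has enough room ($2(k-J)-d_A(b)\ge d_B(b)$, with equality possible), not that a simultaneous assignment exists. This is a genuine list-type edge-colouring problem, not standard Hall, and you give no mechanism to solve it; Lemma~\ref{lem:merginglinfor} does not apply either, since the $B$-vertices can have degree close to $3\Delta/100$ in the current forests.

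The paper sidesteps both issues by reversing your order of operations. It first decomposes $G[B]$ into $t\le \Delta/50$ matchings $M_1,\dots,M_t$ and, for each $i$, augments $M_i$ by part of a cherry-matching into $A$ so that every high-degree $b\in B$ has degree exactly~$2$ in the resulting linear forest $F_i$. Deleting $\bigcup F_i$ leaves a \emph{purely bipartite} graph $G'$ of maximum degree $\Delta-2t$. The second key idea you are missing is the vertex-splitting trick: duplicate $B$ into $B^{+}\cup B^{-}$ and split each $b$'s edges evenly between the two copies; the auxiliary bipartite graph then has maximum degree at most $\lceil\Delta/2\rceil-t$, so K\H{o}nig gives exactly that many matchings, and each matching pulls back to a linear forest in $G'$ (since $A$-vertices have degree $\le 1$ there, no cycles can form). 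This gives the exact count with no $O(1)$ slack to manage and no absorption step at all.
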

\begin{proof}
Let $\Delta := \Delta(G)$. First, split the graph $G[B]$ into matchings $M_1, \ldots, M_t$ with $t \leq 2\Delta(G[B]) \leq \Delta/50$; this can be done by greedily properly-colouring the edges of $G[B]$. Now, let $B_{\text{high}} \subseteq B$ denote the set of vertices in $B$ with degree at least $\Delta/4$ in $A$. By repeatedly applying Lemma \ref{thm:hall} on the bipartite graph $G[A,B_{\text{high}}]$, since for every $a\in A$ we have $d(a)\leq \Delta/100$, note that we can find $t$ edge-disjoint cherry-matchings $M'_1, \ldots, M'_t$ such that the centers in each cherry matching cover $B_{\text{high}}$. Now, for each $i\in[t]$, we can take a linear forest $F_i$ such that $M_i \subseteq F_i \subseteq M_i \cup M'_i$ and every vertex in $B_{\text{high}}$ has degree two in $F_i$.

Deleting all edges in $\bigcup_{i \leq t} F_i$ gives now a bipartite graph $G'$ with partition $A,B$ whose maximum degree is $\max(\Delta - 2t, \Delta/4) = \Delta - 2t > \Delta/2$. We are only left to decompose the edges of $G'$ into $ \lceil \Delta/2 \rceil - t$ linear forests. For this, let $k :=  \lceil \Delta/2 \rceil - t > \Delta/100$ and define an auxiliary bipartite graph $H = (X,Y)$ as follows: let $X$ consist of two copies $B^{+},B^{-}$ of $B$ and $Y := A$. For each $x \in B$ we have two copies $x^{+} \in B^{+}, x^{-} \in B^{-}$ of it in $X$. For each edge $xy$ in $G'$ we add either the edge $x^{+}y$ to $H$ or the edge $x^{-}y$. Clearly, we can do this in such a way that for all $x \in B$, both $x^{+},x^{-}$ have degree at most $k$ in $H$. Observe then that $H$ has maximum degree at most $k$ since also the vertices of $A$ have degree at most $k$ in $G'$. Therefore, Theorem \ref{thm:konig} implies that its edges can be partitioned into $k$ matchings. To finish, note that the edges of $H$ are in one-to-one correspondence with the edges of $G'$ and that each such matching corresponds to a linear forest in $G'$.
\end{proof}

\noindent We finish this subsection with an approximate version of the linear arboricity conjecture, first shown by Alon~\cite{alon1988linear}.
\begin{theorem}\label{thm:approxLAC}
For every $\eps>0$ there exists $\Delta_0$ such that the following holds for all $\Delta\ge \Delta_0$. Every graph with maximum degree at most $\Delta$ can be decomposed into $(1+\eps)\Delta/2$ linear forests.
\end{theorem}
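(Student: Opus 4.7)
The plan is to follow Alon's classical probabilistic argument based on the Lov\'asz Local Lemma. A first routine reduction lets us assume that $G$ is $d$-regular with $d=\Delta$ even: we embed $G$ as a subgraph of a $\Delta$-regular graph $G^\ast$ on more vertices by taking two copies of $G$ and joining them through a bipartite filler on the vertices of deficient degree, and observe that any linear forest decomposition of $G^\ast$ restricts to one of $G$. If $\Delta$ is odd we replace it by $\Delta+1$ at the cost of at most one extra matching in the final decomposition, which is easily absorbed in the slack of size $\varepsilon\Delta/2$.

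For such a $d$-regular $G^\ast$ with $d$ even, fix an Eulerian orientation so that every vertex has in-degree and out-degree exactly $d/2$; this orientation is the device that later prevents cycles from appearing when we recombine matchings into linear forests. The main step is then to take a uniformly random partition of $V(G^\ast)$ into $r$ classes $V_1,\dots,V_r$, where $r=r(\varepsilon)$ is a large constant. For each ordered pair $(i,j)$, let $D_{ij}$ denote the bipartite digraph of arcs directed from $V_i$ to $V_j$. Chernoff-type deviation bounds applied to the in- and out-degree of each vertex inside every $D_{ij}$, together with the Lov\'asz Local Lemma to handle the mild dependencies between adjacent vertices, produce a partition in which every vertex has both in-degree and out-degree at most $(1+\varepsilon/4)\frac{d}{2r}$ in each $D_{ij}$. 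K\"onig's theorem (Theorem~\ref{thm:konig}) then decomposes each $D_{ij}$ into at most $(1+\varepsilon/4)\frac{d}{2r}$ matchings. One assembles these matchings into linear forests by selecting, in a fixed cyclic order over the classes, one matching from each $D_{ij}$; thanks to the orientation every vertex of the resulting subgraph has at most one incoming and one outgoing edge, so the subgraph is a disjoint union of directed paths. In total this produces $(1+\varepsilon/4)\frac{d}{2}$ linear forests covering all inter-class edges.

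The remaining intra-class edges form a graph of maximum degree at most $O(d/r) + o(d)$ (by the same concentration argument), and since $r$ can be taken large in terms of $\varepsilon$, this leftover graph has maximum degree at most $\varepsilon d/8$; it is then absorbed into the existing collection of linear forests via Lemma~\ref{lem:merginglinfor}, without creating any new forests. The total count is thus at most $(1+\varepsilon)\frac{d}{2}$, as required.

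The main obstacle is the combinatorial bookkeeping in the combination step: one must pair matchings from the different $D_{ij}$'s consistently so that no vertex ends up with degree exceeding $2$ or lies on a closed walk in any single forest. A technically cleaner route is to first produce an approximate decomposition into subgraphs of maximum degree $2$ (i.e.\ disjoint unions of paths and cycles), then break one edge per cycle and absorb the broken edges via Lemma~\ref{lem:merginglinfor}; because $r$ is large, the union of broken edges has maximum degree $o(d)$, so this absorption is essentially free and does not affect the leading term.
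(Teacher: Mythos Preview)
The paper does not prove Theorem~\ref{thm:approxLAC}; it is quoted as a result of Alon and used as a black box, so there is no proof in the paper to compare your attempt against.

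Your sketch follows Alon's architecture, but two steps do not work as written. First, the combination step: taking ``one matching from each $D_{ij}$'' over all ordered pairs gives a vertex in $V_i$ up to $r-1$ in-edges and $r-1$ out-edges, not one of each; if instead you mean only the consecutive pairs $D_{i,i+1}$ along a single cyclic order, you cover just $r$ of the $r(r-1)$ bipartite pieces and must decompose the complete digraph on $[r]$ into directed Hamilton cycles and iterate over them. Even along one cyclic order the resulting subgraph is a union of directed paths \emph{and} directed cycles (each cycle must wind through all $r$ classes, hence has length at least~$r$), so your claim that you automatically get only paths is false. Breaking one edge per cycle is indeed the fix, and the reason the broken-edge graph has small maximum degree is precisely that all cycles have length $\ge r$, combined with a careful (random, via another application of the Local Lemma) choice of which edge to break; you have supplied neither ingredient.

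Second, your absorption via Lemma~\ref{lem:merginglinfor} does not apply. That lemma requires every vertex of $H$ to have degree at most the parameter $d$ in \emph{both} $H$ and $G$, where $G$ is the graph already covered by the forests. In your setting the vertices of the leftover $H$ have degree up to $\Delta$ in $G$, so the lemma would demand $4\Delta+1$ forests rather than the roughly $\Delta/2$ you possess. The correct way to handle the leftover (intra-class edges and broken cycle edges) is simply to decompose it separately into $O(\eps\Delta)$ further matchings and add these as new linear forests; the count does go up, but remains within the $(1+\eps)\Delta/2$ budget.
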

\subsection{Properties of high degree vertices in random graphs}\label{sec:highdegree}
\noindent In this section we will discuss various properties satisfied by vertices of high degree, that is, close to the maximum degree, in the random graph $G(n,p)$. 
\begin{lemma}[\cite{bollobas1998random}]\label{lem:binomestimate}
Let $X\sim \mathrm{Bin}(n,p)$ with $pn \geq 1$ and $q := 1-p$. Then, if $hqn \geq 3$, we have
$$\mathbb{P} \left(X \geq pn + h \right) < \sqrt{\frac{pqn}{2h^2 \pi}} \cdot \exp\left(-\frac{h^2}{2pqn} + \frac{h}{pqn} + \frac{h^3}{p^2n^2} \right) .$$
\end{lemma}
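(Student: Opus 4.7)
The plan is to prove this classical binomial tail bound by the standard two-step route: reduce the tail sum to its leading term via a geometric-series estimate, then evaluate the leading term via Stirling's formula and a Taylor expansion of the logarithm. Set $m := \lceil pn + h \rceil$ and write
\[
\mathbb{P}[X \geq pn+h] \;=\; \sum_{k \geq m} \binom{n}{k} p^k q^{n-k}.
\]

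First I would handle the geometric tail. The ratio $\mathbb{P}[X=k+1]/\mathbb{P}[X=k] = (n-k)p/((k+1)q)$ is decreasing in $k$, and for $k \geq m$ it is at most $r := (qn-h)p/((pn+h)q)$. A direct calculation gives $1-r = h/((pn+h)q) = (h/(pqn))/(1 + h/(pn))$, so
\[
\frac{1}{1-r} \;\leq\; \frac{pqn}{h}\Bigl(1 + \frac{h}{pn}\Bigr) \;\leq\; \frac{pqn}{h}\exp\!\Bigl(\frac{h}{pqn}\Bigr),
\]
using $1+x \leq \eul^x$ together with $1/(pn) \leq 1/(pqn)$. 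This single inequality will simultaneously produce the $pqn/h$ part of the square-root prefactor of the target bound and the entire $\exp(h/(pqn))$ correction.

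Next I would estimate $\mathbb{P}[X=m]$ via Stirling in the form $\binom{n}{m} \leq \sqrt{n/(2\pi m(n-m))} \cdot n^n/(m^m(n-m)^{n-m})$. Since $m = pn+h+O(1)$, the prefactor equals $1/\sqrt{2\pi pqn}$ up to a factor $1+O(h/(pqn))$, and combining this with the $pqn/h$ from the previous step produces exactly the target prefactor $\sqrt{pqn/(2\pi h^2)}$. For the remaining product $(pn/m)^m(qn/(n-m))^{n-m}$, I would set $m = pn+h$ (the ceiling costs a harmless $1+O(1/(pn))$ factor) and Taylor-expand
\[
(pn+h)\log\!\Bigl(1 - \tfrac{h}{pn+h}\Bigr) + (qn-h)\log\!\Bigl(1 + \tfrac{h}{qn-h}\Bigr).
\]
The linear terms cancel; the quadratic contribution is $-\tfrac{h^2}{2}\bigl(\tfrac{1}{pn+h}+\tfrac{1}{qn-h}\bigr) = -\tfrac{h^2}{2pqn} + O(h^3/(p^2n^2))$, matching the main Gaussian exponent; and all cubic-and-higher terms are bounded in absolute value by $h^3/(3(pn)^2) + h^3/(3(qn)^2) + \cdots \leq h^3/(p^2n^2)$, where the hypothesis $hqn \geq 3$ (and the implicit $h \leq qn$) keeps $h/(qn-h)$ bounded away from $1$ so the log series has geometric tails.

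The main obstacle is purely book-keeping: several distinct small multiplicative errors — the Stirling prefactor adjustment when $n/(m(n-m))$ is replaced by $1/(pqn)$, the factor $1 + h/(pn)$ in $1/(1-r)$, the ceiling rounding $\lceil pn+h\rceil$ vs.\ $pn+h$, and the $O(h/(pqn))$ lower-order piece of the quadratic Taylor contribution — must all be shown to collectively fit inside the single $\exp(h/(pqn))$ factor of the statement, while the cubic-and-higher Taylor remainder must fit inside $\exp(h^3/(p^2n^2))$. Each individual estimate is of the right order, so this is really just a matter of summing a few lines of inequalities; the only genuine work is making the constants balance cleanly, for which the hypothesis $hqn \geq 3$ is tailored.
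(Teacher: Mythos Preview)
The paper does not prove this lemma; it is quoted verbatim from Bollob\'as's \emph{Random Graphs} (the citation in the lemma header) and used as a black box, so there is no argument in the paper to compare against.

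Your outline is the standard route and is essentially what one finds in the cited reference: dominate the tail by a geometric series with ratio $r=(n-m)p/((m+1)q)$, then estimate the leading term $\mathbb{P}[X=m]$ via Stirling and a second-order Taylor expansion of the entropy. The structure is sound and the identifications you make --- $1/(1-r)$ supplying both the $pqn/h$ part of the prefactor and the $\exp(h/(pqn))$ correction, the quadratic Taylor term giving $-h^2/(2pqn)$, and the cubic-and-higher remainder absorbed by $\exp(h^3/(p^2n^2))$ --- are exactly right. What you have written, however, is explicitly a plan rather than a proof: you note that ``the main obstacle is purely book-keeping'' and that the constants need to balance, but you do not actually carry out any of these estimates. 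For a lemma the paper merely cites, that is fine; but if a full proof were required you would still owe the reader the actual inequalities showing that the several $1+O(h/(pqn))$ factors (from the Stirling prefactor, the ceiling, and the quadratic correction) together stay under $\exp(h/(pqn))$, and that the higher-order log terms stay under $h^3/(p^2n^2)$ --- this last point in particular uses $hqn\ge 3$ to control the $1/(qn-h)$ series, and you should be explicit about where $q$ enters since the target exponent has no $q$ in the cubic term.
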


\begin{lemma}[\cite{bollobas1998random},\cite{krivelevich2012optimal}]\label{lem:maxdegree}\label{lem:degrees in gnp}
Let $n^{-1/2}\ge p\geq 100\log n/n$. With high probability, the minimum degree $\delta$ and maximum degree $\Delta$ of the random graph $G(n,p)$ satisfy
%$$\left|\Delta - pn - \sqrt{2pqn \log n} \right| \leq 2 \log \log n \sqrt{\frac{pqn}{\log n}}$$
\begin{enumerate}[label=\rm{(\roman*)}]
\item $pn+(1-o(1))\sqrt{2pn\log n}\leq \Delta\leq pn+2\sqrt{2pn\log n}$;
\item $pn-2\sqrt{2pn\log n}\leq \delta\leq pn-\frac{1}{2}\sqrt{2pn\log n}$.
\end{enumerate}
\end{lemma}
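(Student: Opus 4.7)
The plan is to analyze each vertex degree $d(v) \sim \mathrm{Bin}(n-1, p)$ and combine Lemma~\ref{lem:binomestimate} with the second moment method. The outer bounds on $\Delta$ and $\delta$ come from a direct union bound: apply Lemma~\ref{lem:binomestimate} with $h = 2\sqrt{2pn\log n}$. Since $p \le n^{-1/2}$, we have $q = 1 - p = 1 - o(1)$, so the leading term in the exponent is $h^2/(2pqn) = (4+o(1))\log n$. The correction terms are of smaller order: $h/(pqn) = O(\sqrt{\log n / pn}) = O(1)$, and $h^3/(p^2 n^2) = O(\log^{3/2} n / \sqrt{pn})$ which, thanks to the hypothesis $pn \ge 100 \log n$, is at most a small multiple of $\log n$. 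Hence $\mathbb{P}[d(v) \ge pn + h] \le n^{-1-\Omega(1)}$, and a union bound over the $n$ vertices gives the upper bound on $\Delta$. The lower bound on $\delta$ follows symmetrically: $n - 1 - d(v) \sim \mathrm{Bin}(n-1, q)$ with $q = 1 - o(1)$, so exactly the same estimate controls the lower tail.

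For the matching inner bounds, I would use the second moment method. Fix a small $\eps > 0$, set $h_+ = (1-\eps)\sqrt{2pn\log n}$, and let $X$ be the number of vertices $v$ with $d(v) \ge pn + h_+$. A standard Stirling-based matching lower bound on the binomial tail (with the same asymptotics as Lemma~\ref{lem:binomestimate} up to polynomial factors in $\log n$) gives $\mathbb{P}[d(v) \ge pn + h_+] \ge c_1 n^{-(1-\eps)^2}/\sqrt{\log n}$, and hence $\mathbb{E}[X] \ge n^{1-(1-\eps)^2 - o(1)} \to \infty$. To bound the variance, I would exploit that distinct vertex degrees share only a single edge: for $u \ne v$, conditioning on whether $uv \in E(G)$, the residual degrees $d(u) - \IND[uv \in E]$ and $d(v) - \IND[uv \in E]$ are independent $\mathrm{Bin}(n-2, p)$ variables. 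This yields
\[
\mathbb{P}\bigl[d(u) \ge pn + h_+,\; d(v) \ge pn + h_+\bigr] \le (1+o(1))\,\mathbb{P}[d(v) \ge pn + h_+]^2,
\]
so that $\mathrm{Var}(X) = o(\mathbb{E}[X]^2) + O(\mathbb{E}[X])$. Chebyshev's inequality then forces $X \ge 1$ with high probability, giving $\Delta \ge pn + (1-\eps)\sqrt{2pn\log n}$ whp. Since $\eps > 0$ is arbitrary, this proves the lower bound on $\Delta$. The upper bound on $\delta$ is analogous and in fact easier, since the target threshold $pn - \tfrac{1}{2}\sqrt{2pn\log n}$ gives an expected count $\mathbb{E}[X] = n^{3/4 - o(1)}$, which is polynomially large; the same near-independence variance calculation then applies without change.

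The main technical obstacle is the variance calculation in the second moment step: one must verify that the events $\{d(u) \ge pn + h_+\}$ and $\{d(v) \ge pn + h_+\}$ are uncorrelated up to a multiplicative factor $1+o(1)$. This reduces to checking that the binomial tail at threshold $pn + h_+$ is stable under shifting the threshold by $\pm 1$ (which amounts to a quantitative version of the Gaussian approximation in this regime) and that conditioning on the edge $uv$ decouples the two residual degrees. Once this is in place, the rest of the proof is routine.
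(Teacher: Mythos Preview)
Your proposal is correct and follows essentially the same path as the paper for the outer bounds: the paper obtains $\Delta \le pn + 2\sqrt{2pn\log n}$ by the identical union bound via Lemma~\ref{lem:binomestimate} with $h = 2\sqrt{2pn\log n}$. For the remaining three bounds the paper is terser than you: it simply cites Theorem~3.12 of Bollob\'as~\cite{bollobas1998random} for the lower bound on $\Delta$ and Lemma~2.2 of Krivelevich--Samotij~\cite{krivelevich2012optimal} for both bounds on~$\delta$. Your second-moment argument is the standard mechanism underlying those cited results, so you are filling in what the paper defers to references; the technical point you flag (stability of the binomial tail under a shift of $\pm 1$, which yields the near-independence of the indicator events for distinct vertices) is genuine but routine here, since the point mass at the threshold is an $O(h/pn)=O(\sqrt{\log n/pn})=o(1)$ fraction of the tail.
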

\begin{proof}
For the first part, the upper bound follows immediately by a union bound over all vertices, since by \Cref{lem:binomestimate} the probability that a vertex has degree at least  $pn+2\sqrt{2pn\log n}$ is at most $o(1/n)$ (by letting $h=2\sqrt{2pn\log n}$).
The lower bound is a direct consequence of Theorem 3.12 in \cite{bollobas1998random} with $m$ being a function which tends to infinity arbitrarily slowly and noting that $\Delta=d_1\geq d_m$. The second part is proven in (\cite{krivelevich2012optimal}, Lemma 2.2).
\end{proof}

\begin{prop}\label{prop:bad vertices}
Let $n^{-2/3} \geq p \geq C \log n/n$ for $C>0$ a large enough constant and let $1/100\geq\alpha>0$. Define $B$ to be the set of vertices with degree at least $pn + (1-\alpha)\sqrt{2pn\log n}$ in $G(n,p)$. Then, with high probability, all of the following hold. 
\begin{enumerate}
    \item $|B| \leq n^{1/10}$.
    \item Every vertex $v$ has at most $100 \leq \sqrt{2pn\log n}/10^{20}$ neighbours in $B\cup N(B-v)$.
\end{enumerate}
\end{prop}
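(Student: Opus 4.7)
The plan is to prove the two parts separately: Part 1 is a single-vertex tail estimate followed by Markov, while Part 2 requires a union bound over ``witness'' configurations, combined with a disjointness trick that renders the events $\{b\in B\}$ on disjoint vertex sets effectively independent.

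For Part 1, I apply Lemma~\ref{lem:binomestimate} with $h=(1-\alpha)\sqrt{2pn\log n}$. The dominant contribution to the exponent is $-h^2/(2pqn)=-(1-\alpha)^2\log n/(1-p)$, while the correction terms $h/(pqn)$ and $h^3/(p^2n^2)$ each contribute at most $O(\log n/\sqrt{C})$ once $pn\ge C\log n$. Since $\alpha\le 1/100$ we have $(1-\alpha)^2\ge 0.98$, so for $C$ sufficiently large $\mathbb{P}[v\in B]\le n^{-0.97}$. Hence $\mathbb{E}[|B|]\le n^{0.03}$, and Markov's inequality yields $|B|\le n^{1/10}$ with probability $1-o(1)$.

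For Part 2, suppose some vertex $v$ has at least $100$ neighbours in $B\cup N(B-v)$. Then there exist distinct $u_1,\dots,u_{100}\in N(v)$ together with witnesses $b_i\in B\setminus\{v\}$ satisfying $b_i=u_i$ or $b_i\sim u_i$. A first pigeonhole on the multiset $\{b_i\}$ produces two cases. In case (I) some single $b^*\in B\setminus\{v\}$ is the witness for at least ten of the $u_i$, which forces $|N(v)\cap N(b^*)|\ge 9$. In case (II) at least twelve distinct vertices appear among the $b_i$, and a second pigeonhole yields either (IIa) five distinct $b$'s lying in $B\cap N(v)$, or (IIb) five distinct $b$'s at distance exactly $2$ from $v$. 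In (IIb) I split further by the number of ``middle'' vertices $u\in N(v)$ used to reach the five $b$'s: either one such $u$ is adjacent to $\ge 3$ of them, or the middles realise $3$, $4$, or $5$ distinct vertices. For each configuration the probability is bounded by $n^{|V_{\mathrm{conf}}|}p^e n^{-\gamma k}$, where $k$ is the number of distinct $b$'s in the configuration and $\gamma=(1-\alpha)^2-o(1)\ge 0.97$. The factor $n^{-\gamma k}$ comes from restricting each $b_j$'s remaining degree to edges into $V\setminus V_{\mathrm{conf}}$: the resulting edge sets are disjoint across distinct $b_j$, so the threshold events become independent at the cost of lowering the threshold by $O(1)$, which is absorbed into Lemma~\ref{lem:binomestimate}.

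The main obstacle is that since the only available density bound is $p\le n^{-2/3}$, the exponent $|V_{\mathrm{conf}}|-(2/3)e-\gamma k$ must be strictly negative in every configuration. The delicate subcase is the three-leg spider with $k=3$ on $7$ vertices and $6$ edges: it gives $3-3\gamma$, which fails because $\gamma\le 1$. The remedy is to always extract five distinct $b$'s in case (II) before splitting, so that the three, four, or five middle-vertex subcases of (IIb) produce configurations on $(9,10,11)$ vertices with $(8,9,10)$ edges and $k=5$, leading to exponents $11/3-5\gamma$, $4-5\gamma$, and $13/3-5\gamma$, all strictly negative once $\gamma\ge 0.97$. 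The remaining subcases --- case (I) giving $n^{-1-\gamma}$, case (IIa) giving $n^{8/3-5\gamma}$, and the ``single $u$ with $\ge 3$ $b$-neighbours'' subcase giving $n^{7/3-3\gamma}$ --- are easily $o(1)$. Summing over the $O(n)$ choices of $v$ then finishes Part 2.
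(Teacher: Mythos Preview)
Your proof is correct. Part~1 is essentially identical to the paper's argument: a tail bound from Lemma~\ref{lem:binomestimate} followed by Markov.

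For Part~2, however, you take a genuinely different route. The paper argues by two-round exposure: for a fixed vertex $v$, first reveal $G-v$ and use a $100$th-moment bound to show that the set $B'$ of high-degree vertices in $G-v$ satisfies $\mathbb{P}[|B'|\ge n^{1/10}]\le n^{-6}$; on this event $|B'\cup N(B')|\le n^{0.55}$ since $p\le n^{-2/3}$, and then revealing the edges at $v$ gives probability at most $(n^{0.55}p)^{100}=o(1/n)$ that $v$ has $100$ neighbours in this small set. You instead run a direct first-moment calculation over explicit witness configurations, using the decoupling trick of restricting each $b_j$'s degree to $V\setminus V_{\mathrm{conf}}$ so that the events $\{b_j\in B\}$ become independent of each other and of the configuration edges. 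This is more elementary --- no moments, no staged exposure --- but requires a careful case split; in particular you correctly note that with only three distance-$2$ witnesses the three-leg spider has exponent $3-3\gamma\ge 0$, and you remedy this by first extracting five distinct $b$'s via pigeonhole before splitting on the number of middle vertices. The calculations in all subcases check out (for instance the single-$u$ subcase has $5$ vertices, $4$ edges, $k=3$, giving $n^{5-8/3-3\gamma}=n^{7/3-3\gamma}$, which is $o(1)$ once $\gamma\ge 0.97$).

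One minor quibble: your final sentence about ``summing over the $O(n)$ choices of $v$'' is redundant, since $v$ is already counted in $|V_{\mathrm{conf}}|$ in the exponents you quote. This does not affect correctness. The paper's approach is cleaner and reusable; yours is self-contained and makes the combinatorics of why $p\le n^{-2/3}$ suffices more transparent.
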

\begin{proof}
%First, note that by Lemma \ref{lem:maxdegree}, whp we have that $\Delta \geq  pn + (1-o(1))\sqrt{2pqn \log n}$. 
Note that if $h := (1-\alpha)\sqrt{2pn\log n} - 100$, then 
$$-\frac{h^2}{2pqn} + \frac{h}{pqn} + \frac{h^3}{p^2n^2} < \left(-1+2\alpha+\frac{10}{\sqrt{C}} \right) \log n ,$$ and so Lemma \ref{lem:binomestimate} implies that
\begin{align}
\label{prob bound}
\mathbb{P} \left(X \geq pn + h\right) < n^{-1+2\alpha+\frac{10}{\sqrt{C}}},
\end{align}
whenever $X$ is a $\text{Bin}(n,p)$ random variable. Since the degree of each vertex in $G(n,p)$ is a $\text{Bin}(n-1,p)$ random variable, it follows that the expected number of vertices with degree at least $pn +h$ is at most $n^{2\alpha+\frac{10}{\sqrt{C}}}$.
Therefore, by Markov's inequality, with high probability there are at most $n^{2\alpha+\frac{11}{\sqrt{C}}} \leq n^{1/10}$ such vertices, so that $|B| \leq n^{1/10}$.

Now we prove that, for any fixed set $S$ of at most $100$ vertices, we have $\prob{S\In B}\le (n^{-1+0.03})^{|S|}$. For this, let us condition on the subgraph $G[S]$. Clearly, this contributes at most $100$ to the degree of each vertex in $S$. With $G[S]$ fixed, for each vertex in $S$ to also be in $B$, it must be that each such vertex has at least $pn+h$ neighbours in $V(G)-S$. Now, by~\eqref{prob bound}, the probability that a given vertex in $S$ has at least $pn+h-100$ neighbours in $V(G)-S$ is at most $n^{-1+0.03}$. Furthermore, since the events of the vertices in $S$ having at least $pn+h-100$ neighbours in $V(G)-S$ are independent, it follows that the probability that $S \subseteq B$ is at most $(n^{-1+0.03})^{|S|}$. 

Note that in particular, letting $A$ denote the set of vertices in $G(n,p)$ with degree at least $pn+h$, this statement implies that $\expn{|A|^{100}} = O(n^{100}) (n^{-1+0.03})^{100}=O( n^3)$. Indeed, $\binom{|A|}{100} = \Theta(|A|^{100})$ counts the number of subsets of size $100$ contained in $A$. By the statement proven in the previous paragraph, each subset of size $100$ is contained in $A$ with probability at most $(n^{-1+0.03})^{100}$. Therefore, $\expn{|A|^{100}} = O\left(\expn{\binom{|A|}{100}}\right) = O(n^{100}) (n^{-1+0.03})^{100} =  O(n^3)$. Applying now Markov's inequality, we have that
$\prob{|A|\ge n^{1/10}} = \prob{|A|^{100}\ge n^{10}}\le \frac{\expn{|A|^{100}}}{n^{10}} \le n^{-6}$. We will need this bound in what follows. 

Now, for the second part we prove that for each vertex $v\in V(G)$, the probability that it has at least $100$ neighbours in  $B\cup N(B-v)$ is at most $o(1/n)$, so that we are then done by a union bound over all $n$ vertices. Fix then a vertex $v$, and expose all edges contained in $V(G)-v$. By the previous paragraph, there are at most $n^{1/10}$ vertices in $G[V-v] \sim G(n-1,p)$ with degree at least $pn+h$, with probability at least $1-n^{-6}$. Call this set of vertices $B'$ and note that $B\subset B'\cup \{v\}$. 
Also note that by standard concentration bounds and a union bound we have that the degree of every vertex in $V-v$ is at most $2np$ with probability at least $1-o(1/n^2)$. This implies that $|B' \cup N(B')| \leq |B'|(1+2np) < n^{0.55}$.
Now we expose the edges touching $v$, and note that the probability that $v$ has at most $100$ neighbours in $B\cup N(B-v)$ is at most $|B'\cup N(B')|^{100}p^{100} \leq n^{55}p^{100} = o(1/n^2)$. Combining all of these events we have that $v$ will have at most $100$ neighbours in $B \cup N(B - v)$ with probability at least $(1-n^{-6})(1-o(1/n^2))(1-o(1/n^2)) = 1 - o(1/n)$, as desired.
\end{proof}

\section{Extending one linear forest}
In this section we show how an appropriate linear forest in a random graph can be extended into a Hamilton cycle. We start with some basic properties typically satisfied by random graphs. 

% \begin{defin}
% A graph $G$ is said to be an \emph{$(s,K)$-expander} (or \emph{$(s,K)$-expanding}) if every subset $S \subseteq V(G)$ of size at most $s$ is such that $|N_G(S)| \geq K|S|$. 
% \end{defin}

\begin{defin}
We say that a graph $G=(V,E)$ has \emph{property $P_\alpha(s,d)$} if for every $X\subseteq V$ of size $|X|\leq s$ and every $F\subseteq E$ such that $|F\cap \partial_{G}(x)|\leq \alpha\cdot d_G(x)$ for every $x\in X$, we have $|N_{G\setminus F}(X)|\geq 2d|X|$.
\end{defin}

The following facts are relatively standard, we include the proofs for completeness.

\begin{lemma}\label{lem:gnp properties}
Let $G\sim G(n,p)$. If $p\geq C\log n/ n$ for large enough $C>0$, then whp $G$ has the following properties.
\begin{enumerate}[label=\rm{(\alph*)}]
\item\label{p:alpha joint} For every two disjoint sets $A,B$ of size at least $\frac{n\log\log n}{\log n}$ we have $e(A,B)>0$.
\item\label{p:edge distribution} For every two disjoint sets $A,B$ with $|A|\geq n/\log n$ and $|B|\geq 100n/C$ we have that $e(A,B)\geq 2|B|$.
\item\label{p:robust expansion1} For $\alpha = 1 - (\log n)^{-1/8}$ and $d=(\log\log n)^{10^4}$, $G$ has the $P_\alpha\big(\frac{n}{\log n\log\log n},d\big)$-property.
\item\label{p:robust expansion2} For $\alpha = 1 - (\log \log \log n)^{-1/8}$ and $d=50\frac{\log\log n}{\log\log\log n}$, $G$ has the $P_\alpha\big(\frac{n}{\log\log n},d\big)$-property.
\end{enumerate}
\end{lemma}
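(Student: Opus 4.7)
The plan is to verify each of the four properties by standard first-moment or concentration arguments combined with a union bound; I outline the strategy for each.

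For (a), I would bound $\mathbb{P}[e(A,B)=0]\leq \exp(-p|A||B|)\leq \exp(-Cn(\log\log n)^2/\log n)$ for any fixed disjoint $A,B$ with $|A|,|B|\geq n\log\log n/\log n$. The logarithm of the number of candidate pairs of fixed sizes is of the same order $(2+o(1))n(\log\log n)^2/\log n$, so a union bound succeeds once $C$ is large enough (larger $|A|,|B|$ only help). For (b), $\mathbb{E}[e(A,B)]=p|A||B|\geq C|B|$ in the stated range, so a Chernoff bound gives $\mathbb{P}[e(A,B)<2|B|]\leq \exp(-\Omega(C|B|))\leq \exp(-\Omega(n))$ for $C$ large, which easily beats the at most $4^n$ choices of $(A,B)$.

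Parts (c) and (d) share the same structure, so I describe the argument generically. Suppose $(X,F)$ witnesses a failure of $P_\alpha(s,d)$, with $k:=|X|\leq s$; set $N:=N_{G\setminus F}(X)$, so $|N|<2dk$. Each $x\in X$ retains $d_{G\setminus F}(x)\geq (1-\alpha)d_G(x)$ edges in $G\setminus F$, all ending inside $X\cup N$, so summing over $x\in X$,
\[
e_G(X,N)+2e_G(X)\;\geq\;(1-\alpha)\sum_{x\in X}d_G(x)\;\geq\;(1-\alpha)k\,\delta(G).
\]
Conditioning on the high-probability event $\delta(G)\geq np/2$ from Lemma~\ref{lem:maxdegree}, the bad event forces either $e_G(X,N)\geq (1-\alpha)knp/4$ or $e_G(X)\geq (1-\alpha)knp/8$. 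An upper-tail Chernoff bound dominates the first probability by $(8edk/((1-\alpha)n))^{(1-\alpha)knp/4}$ and the second by something smaller; multiplying by $\binom{n}{k}\binom{n}{2dk}\leq (en/k)^k(en/(2dk))^{2dk}$ yields an expected number of bad witnesses of size $k$ bounded by
\[
\exp\!\Bigl(O\bigl((2d+1)k\log(n/k)\bigr)-\Omega\bigl((1-\alpha)knp\log\tfrac{(1-\alpha)n}{8edk}\bigr)\Bigr).
\]

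A direct substitution finishes the job in both regimes. In (c), with $1-\alpha=(\log n)^{-1/8}$, $d=(\log\log n)^{10^4}$ and $s=n/(\log n\log\log n)$, the ratio $(1-\alpha)n/(8edk)$ is at least $(\log n)^{7/8}/(\log\log n)^{10^4}$ up to constants, comfortably making the exponent negative and summable over $k\leq s$. In (d), with $1-\alpha=(\log\log\log n)^{-1/8}$, $d=50\log\log n/\log\log\log n$ and $s=n/\log\log n$, the corresponding ratio is only $(\log\log\log n)^{7/8}/O(1)$, so the parameters are tight; however $(1-\alpha)np\approx C\log n/(\log\log\log n)^{1/8}$ is still much larger than $2d\log\log\log n$, and this suffices to close the union bound for all $n$ large enough. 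The main obstacle is precisely this delicate bookkeeping in case (d), where the parameters are barely large enough; no new conceptual idea beyond the standard ``small set with too many edges'' reduction is required.
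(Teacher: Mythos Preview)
Your proposal is correct and follows essentially the same route as the paper. Parts~(a) and~(b) are identical to the paper's arguments. For~(c) and~(d), both you and the paper reduce failure of $P_\alpha(s,d)$ to the event that the small set $X$ (together with its truncated neighbourhood $N$, $|N|<2dk$) carries at least $(1-\alpha)k\,\delta(G)/2$ edges, and then close with a first-moment/union bound; the only cosmetic difference is that the paper packages this as ``$G[X\cup N]$ has too many edges'' (for (c)) or ``too many edges touching $X$'' (for (d)), whereas you split into $e_G(X,N)$ and $e_G(X)$ and bound each by upper-tail Chernoff. Your remark that case~(d) is the tight one is accurate and matches the paper's calculation.
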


\begin{proof}
    For the first claim, note that for fixed sets $A$ and $B$ of the given size, the expected number of edges is $\mathbb{E}[e(A,B)] = p|A||B|  \geq \frac{Cn(\log\log n)^2}{\log n}$. Hence, by standard Chernoff bounds we have that $$\mathbb P[e(A,B)=0]\leq e^{-\mathbb{E}[e(A,B)]/10}\leq e^{- \frac{3n(\log\log n)^2}{\log n}}.$$

\noindent On the other hand, the number of pairs $A,B$ of the specified size $s= \frac{n\log\log n}{\log n}$ is bounded by $\binom{n}{s}^2\leq (en/s)^{2s} \leq (\log n)^{2s}\leq e^{\frac{2n(\log\log n)^2}{\log n}}$, so by a union bound over all such pairs we complete the proof of the first part. The proof for the second part is analogous. For each such pair $A,B$ we have that the expectation $\mathbb{E}[e(A,B)]$ is at least $100n$. Therefore, 
$$\mathbb P[e(A,B)<2|B|]\leq e^{-\mathbb{E}[e(A,B)]/10}\leq e^{-10n}.$$
Combined with a union bound over at most $2^{2n}$ pairs, we complete this part.

For the third part, we first show that every set $S\subseteq V(G)$ of size at most $\frac{n(2d+1)}{\log n\log\log n}$ spans at most $\eps |S|np/d$ edges, where $\eps := (\log n)^{-1/4}$. Note that the probability that there is such an $S$ which spans at least $\varepsilon|S|np/d$ edges is, by a union bound, at most
\begin{align*}
    &\sum_{s=1}^{\frac{n(2d+1)}{\log n\log \log n}}\binom{n}{s}\binom{s^2/2}{\varepsilon snp/d}p^{\varepsilon snp/d} \leq \sum_{s \geq 1} \left(\frac{e n}{s}\right)^s\left(\frac{e sdp}{2 \varepsilon n p}\right)^{\varepsilon snp/d}=\sum_{s\geq 1} \left(\frac{e n}{s}\left(\frac{e sd}{2\varepsilon n}\right)^{ \varepsilon n p/d}\right)^s\\ &\leq \sum_{s\geq 1} \left(\frac{e n}{s}\left(\frac{e sd}{2 \varepsilon n}\right)^{\varepsilon \log n/d}\right)^s \leq \sum_{s\geq 1} \left(\left(\frac{100s}{n}\right)^{\varepsilon \log n/d-1}\left(\frac{d}{\varepsilon }\right)^{\varepsilon \log n/d}\right)^s \\&\leq\sum_{s \geq 1} \left(\left(\frac{\log n}{d}\right)^{-\varepsilon \log n/d+1}\left(\frac{d}{\varepsilon }\right)^{\varepsilon \log n/d}\right)^s\rightarrow 0,
\end{align*}
\noindent where we have used that $esd < 2 \eps n$ and that since the last geometric sum tends to $0$ because $\left(\frac{\log n}{d}\right)^{-\varepsilon \log n/d+1}\left(\frac{d}{\varepsilon }\right)^{\varepsilon \log n/d} \rightarrow 0$.
Hence, with high probability there is no such set $S$. We can now use this to show that the third property of the lemma holds with high probability. 

Indeed, note first that we have that with high probability, $\delta(G)\geq \frac{np}{2}$ by Lemma \ref{lem:maxdegree}. Suppose for sake of contradiction that $G$ does not have property $P_\alpha(\frac{n}{\log n\log\log n},d)$. Then, by definition, there must exist a set $X$ and a set of edges $F$ such that $|N_{G\setminus F}(X)| < 2d|X|$ and $|F\cap N_{G}(x)|\leq \alpha\cdot d_G(x)$ for every $x\in X$. This implies that the set $S := X \cup N_{G\setminus F}(X)$ which has size $|S| \leq (2d+1)|X| \leq \frac{n(2d+1)}{\log n\log\log n}$ spans at least $(1-\alpha)\delta(G)|X|/2 \geq (1-\alpha)|S|np/10d \geq \frac{|S|np}{d(\log n)^{1/4}}$ edges. In turn, the previous paragraph asserts that the probability that such an $X$ exists tends to $0$. Hence, the third part also holds with high probability.

For the last part, we will again prove that a more general statement holds with high probability. We show that with high probability for every two disjoint sets $S$ and $S'$ such that $|S|\leq \frac{n}{\log\log n}$ and $|S'|\leq 2d|S|$, the number of edges in $G[S\cup S']$ with at least one vertex in $S$ is at most $\eps |S|np$ for $\eps := (\log \log \log n)^{-1/4}$.
Clearly, we can upper bound by a union bound the probability that this event does not hold by at most
\begin{align*}
    &\sum_{s = 1}^{\frac{n}{\log \log n}}\binom{n}{s}\binom{n}{2ds}\binom{2ds^2+s^2}{\varepsilon snp}p^{\varepsilon snp}\leq \sum_{s \geq 1 }\binom{n}{2ds}^2\binom{2ds^2+s^2}{\varepsilon snp}p^{\varepsilon snp} \\
    &\leq \sum_{s \geq 1}\left(\frac{en}{2ds}\right)^{4ds} \left(\frac{3edps^2}{\eps s n p}\right)^{\varepsilon snp}\leq \sum_{s \geq 1} \left(\left(\frac{en}{2ds}\right)^{4d}\left(\frac{3esd}{\varepsilon n}\right)^{\varepsilon n p}\right)^s
       \leq \sum\left(20^d\left(\frac{n}{ds}\right)^{4d-\varepsilon np} \left( \frac{3e}{\eps}\right)^{\eps np}\right)^s
       \\ &\leq \sum\left(\left(\frac{\eps^2 n}{1000ds}\right)^{-\varepsilon np/2}\right)^s\rightarrow 0
\end{align*}
where we used that $d \leq \log \log n \leq \eps n p/10$, that $3esd < \eps n$. Further, the last geometric sum tends to $0$ since $\eps^2 n > 1000ds$. Now, to show the fourth part, suppose for sake of contradiction that the graph does not satisfy the $P_\alpha\big(\frac{n}{\log\log n},d\big)$-property. Then, there must exist a set $X$ of size at most $\frac{n}{\log \log n}$ and a set of edges $F$ such that $|N_{G\setminus F}(X)| < 2d|X|$ and $|F\cap N_{G}(x)|\leq \alpha\cdot d_G(x)$ for every $x\in X$. Denoting now $X$ by $S$ and $N_{G\setminus F}(X)$ by $S'$ we have that there are at least $(1-\alpha)\delta(G)|S|/2 \geq (1- \alpha)np|S|/4 > \eps np |S|$ edges in $G[S \cup S']$ which touch $S$. But since $|S| \leq \frac{n}{\log \log n}$, by the previous analysis this can only occur with probability tending to $0$, which completes the proof.
\end{proof}

\noindent The following result comes from the Friedman--Pippenger tree embedding technique, and is proven in  (\cite{draganic2022rolling}, Theorem 3.5). It states that given a collection of pairs of vertices in a graph with good expansion properties, if the pairs are nicely distributed in the graph, one can connect each pair with mutually vertex disjoint paths.
% There exists an $\varepsilon>0$ such that the following holds. Let $G$ be a $(|G|/10,3)$-expanding $n$-vertex graph with $\delta(G)\geq d_0(\varepsilon)$ such that every two sets $A,B$ of size at least $|G|/20$ have $e_G(A,B) > 0$. Then, given a collection of at most $\varepsilon n/\log n$ vertex-disjoint pairs $(a_i,b_i)$ such that every vertex in a pair has at least $ $ neighbours outside of the pairs, there exists a collection of vertex disjoint $a_ib_i$-paths $P_i$.
\begin{theorem}\label{Thm:VDP}
Let $G$ be a graph with the $P_\alpha(m,d)$ property for some $3\leq d<m$,  such that for every two disjoint $U,V\subseteq V(G)$ of sizes $|U|,|V|\geq m(d-1)/16$ there exists an edge between $U$ and $V$. Let $S$ be any set of vertices such that $|N_G(x)\cap S|\leq \beta d_G(x)$ for every $x\in V(G)$, where $\beta < 2\alpha - 1$, and
let $P=\{\{a_i,b_i\}\}$ be a collection of at most $\frac{dm\log d}{15\log m}$ disjoint pairs from $S$. Then there exist vertex-disjoint paths in $G$ between every pair of vertices $\{a_i,b_i\}$, such that the length of each path is $2\left\lceil\frac{\log (m/16)}{\log (d-1)}\right\rceil+3$. 
\end{theorem}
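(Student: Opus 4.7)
The plan is to use a Friedman--Pippenger style iterative tree embedding, powered by the robust expansion $P_\alpha(m,d)$. Writing $h=\lceil\log(m/16)/\log(d-1)\rceil$, for each pair $\{a_i,b_i\}$ I would grow rooted trees $T_{a_i}$ and $T_{b_i}$ of depth $h+1$ and branching factor $d-1$, mutually vertex-disjoint and disjoint from $S\setminus\{a_j,b_j\}$. Each such tree has at least $(d-1)^{h+1}\geq m(d-1)/16$ leaves at the bottom layer, so the edge-existence hypothesis applied to the leaf sets $L_{a_i},L_{b_i}$ produces a connecting edge $u_iv_i$; concatenating two root-to-leaf paths with this edge yields an $a_i$--$b_i$ path of length $(h+1)+1+(h+1)=2h+3$, matching the statement.

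The trees are built by iterative embedding, extending one tree at a time and processing it layer by layer while maintaining a standard Friedman--Pippenger potential so that the active frontier remains small enough for $P_\alpha(m,d)$ to be invoked. Concretely, when extending a frontier $X$, declare forbidden at each $x\in X$ the edges of $\partial_G(x)$ pointing into $S$, into the other endpoints $\{a_j,b_j\}\setminus\{x\}$, and into already-used tree vertices; the $S$-share is at most $\beta d_G(x)$ by hypothesis, while the used-vertex share is controlled by the cap $|P|\leq\frac{dm\log d}{15\log m}$. Since $\beta<2\alpha-1$, the total forbidden share stays below $\alpha d_G(x)$, so $P_\alpha(m,d)$ yields $|N_{G\setminus F}(X)|\geq 2d|X|$. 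A Hall-type matching in the bipartite graph between $X$ (taken with multiplicity $d-1$) and $N_{G\setminus F}(X)$ then allocates $d-1$ fresh children to every leaf $x\in X$, advancing the construction by one layer.

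The main obstacle is the delicate bookkeeping that ties these pieces together: one must organise the sequential growth of the $2|P|$ trees so that at each extension step the active frontier stays within the expansion threshold $m$, and simultaneously track the cumulative forbidden-edge share so that it never exceeds $\alpha d_G(x)$ at any vertex being extended. The condition $\beta<2\alpha-1$ is the clean form of the slack needed after the $S$-portion is paid for, and the quantitative bound on $|P|$ is precisely calibrated so that this slack is not exhausted over the $h+1$ iterations. Once the trees are built, the connection step is routine: processing pairs one by one, each $|L_{a_i}|,|L_{b_i}|$ remains above the $m(d-1)/16$ threshold after discarding the $O(|P|)$ leaves used by earlier connections, so the edge-existence hypothesis directly produces the required connecting edges.
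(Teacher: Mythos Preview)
First, note that the paper does not itself prove this statement: it is quoted from \cite{draganic2022rolling} (Theorem~3.5 there), with only the remark that it ``comes from the Friedman--Pippenger tree embedding technique''. So there is no in-paper argument to compare against; your sketch is to be measured against the cited source. Your overall architecture --- grow $(d-1)$-ary trees from each endpoint via a Friedman--Pippenger extendability invariant, then link the two leaf sets using the edge-existence hypothesis --- is exactly the right one, and your length count $2h+3$ is correct.

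There is, however, a genuine gap. You propose to build \emph{all} $2|P|$ trees to full depth $h+1$ first and only afterwards perform the connections. But each such tree already has at least $(d-1)^{h+1}\ge m(d-1)/16$ vertices, so even a single pair of trees occupies $\Omega(m)$ vertices; over $2|P|$ trees the total is of order $|P|\cdot m$, which for $|P|$ anywhere near the allowed $\frac{dm\log d}{15\log m}$ is far beyond what the $P_\alpha(m,d)$ property, or any Friedman--Pippenger invariant built on it, can absorb (that machinery embeds forests of total size $O(m)$, not $O(|P|\cdot m)$). Your sentence ``the used-vertex share is controlled by the cap on $|P|$'' would be true if only the final \emph{paths} were retained --- their combined length is $|P|(2h+3)=O\!\big(|P|\cdot\tfrac{\log m}{\log d}\big)$, and this is precisely what the cap on $|P|$ is calibrated for --- but it is false for the full trees.

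The fix, and indeed the main point of \cite{draganic2022rolling}, is to \emph{interleave} growing and connecting. Process the pairs one at a time: for pair $i$, grow $T_{a_i}$ and $T_{b_i}$ (avoiding $S$ and the previously constructed paths), connect their leaf sets, extract the resulting $a_i$--$b_i$ path, and then \emph{roll back} --- release every tree vertex not on that path and restore the extendability invariant before moving to pair $i+1$. Only the paths accumulate, so the permanently forbidden set stays of size $O\!\big(|P|\cdot\tfrac{\log m}{\log d}\big)$, and the slack encoded in $\beta<2\alpha-1$ then suffices to cover both the $S$-share and this path-share at every extension step. Without this rolling-back mechanism the argument cannot close; with it, the rest of your outline goes through essentially as written.
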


\noindent A graph is \emph{Hamilton-connected} if for every two vertices in the graph, there is a Hamilton path between them.
We also use the following result of Hefetz, Krivelevich and Szabó (\cite{hefetz2009hamilton}, Theorem~1.2), which states that sufficiently good expander graphs that have an edge between every two large enough sets are Hamilton-connected.
\begin{theorem}\label{lem:hamilton-connected}
Let $G$ be a graph on $n$ vertices for sufficiently large $n$. Suppose that for some $d=d(n)$ the following hold:
\begin{itemize}
    \item For every $S \subseteq V(G)$, if $|S|\leq \frac{n\log\log n\log d}{d \log n\log\log\log n}$ then $|N(S)|\geq d|S|$.
\item There is an edge in $G$ between any two disjoint subsets $A,B \subseteq V(G)$ such that $|A|,|B|\geq \frac{n\log \log n\log d}{4130\log n\log \log \log n}$.
\end{itemize}
Then, $G$ is Hamilton-connected.
\end{theorem}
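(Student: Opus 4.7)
The approach I would take is the classical Pósa rotation-extension technique, adapted to the two-endpoint setting. Fix vertices $s, t \in V(G)$ and suppose for contradiction that there is no Hamilton $s$-$t$ path. Let $P$ be a longest $s$-$t$ path; since $P$ is not Hamilton, some $w \in V(G) \setminus V(P)$ exists. The plan is to perform Pósa rotations to produce a large set of alternative $s$-$t$ paths on the same vertex set, then use the large-set edge condition to find an augmentation that lengthens $P$ and yields a contradiction.

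First I would perform rotations at the $t$-end of $P$, keeping $s$ fixed as one endpoint. A rotation picks a neighbor $x'$ of the current endpoint $x$ on $P$, deletes the edge of $P$ from $x'$ to its successor $x''$, and adds $xx'$, so that $x''$ becomes the new endpoint. Iterating, one obtains a tree of paths indexed by endpoint; let $T_\ell$ denote the set of endpoints reachable after $\ell$ rotation steps. The key invariant, which I would prove by induction, is that while $|T_\ell|\le m := \frac{n\log\log n \log d}{d\log n \log\log\log n}$, each vertex in $T_\ell$ has at least (say) a $1/2$-fraction of its neighbourhood "available" for the next rotation (the remaining edges being edges that have already been broken in previous rotations). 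Combined with the small-set expansion hypothesis $|N(S)| \ge d|S|$, this gives $|T_{\ell+1}| \ge (d/3)\,|T_\ell|$, so after $O(\log n / \log d)$ steps the set $T_\ell$ exceeds the second threshold $m' := \frac{n\log\log n \log d}{4130 \log n \log\log\log n}$.

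Once $|T_\ell| \ge m'$, I would repeat the rotation procedure, this time fixing the endpoint $y \in T_\ell$ and rotating at the original $s$-end, obtaining for each $y$ a set $T'(y)$ of alternative "other" endpoints, again of size at least $m'$ by the same argument. Now consider the set $A$ of vertices $y$ for which some rotated $s$-$y$ path uses a vertex of $N(w)\cup\{w\}$ at an available position, and the set $B = T'(y)$ for a suitable $y$; by the second hypothesis there is an edge in $G$ between two such large sets, and standard Pósa bookkeeping converts this edge into either (i) a direct extension of $P$ including $w$, or (ii) a path strictly longer than $P$ between $s$ and $t$, contradicting the choice of $P$.

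The main obstacle is the bookkeeping needed to maintain expansion through many rotation steps: every rotation removes some edges from the pool available at the current endpoints, and one has to ensure the cumulative fraction of "broken" edges at any vertex never exceeds the threshold permitted by the $P_\alpha(m,d)$-style expansion we are implicitly using. The quantitative choice $d$ large and the precise form of $m$ in the hypothesis are tuned exactly so that the geometric growth $d^\ell$ outpaces the linear accumulation of broken edges over $\ell = O(\log n / \log d)$ steps, with logarithmic slack; executing this carefully is the technical heart of the argument.
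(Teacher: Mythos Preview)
The paper does not prove this theorem at all: it is quoted verbatim as Theorem~1.2 of Hefetz, Krivelevich and Szab\'o~\cite{hefetz2009hamilton} and used as a black box. So there is no ``paper's own proof'' to compare against; you are attempting to reprove an external result.

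That said, your sketch is in the right spirit --- the original proof does proceed via the P\'osa rotation--extension machinery --- but there is a genuine gap in how you set up the two-endpoint version. You start from ``a longest $s$-$t$ path $P$'' and then perform rotations at the $t$-end keeping $s$ fixed. But a rotation at the $t$-end produces an $s$-$y$ path for some new endpoint $y\neq t$; you have lost the property of ending at $t$. Your subsequent double-rotation produces, for each $y\in T_\ell$, a family of $z$-$y$ paths for various $z$, none of which need be $s$-$t$ paths. The sentence ``standard P\'osa bookkeeping converts this edge into \ldots\ a path strictly longer than $P$ between $s$ and $t$'' is precisely the step that does not follow: you can close a cycle and extend to get a longer \emph{path}, but there is no mechanism in what you wrote to recover both prescribed endpoints.

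The way this is handled in the Hefetz--Krivelevich--Szab\'o argument is different: one first shows (by rotation--extension) that the longest path from $s$ is spanning, and \emph{then} argues, among Hamilton paths with one endpoint $s$, that the set of achievable other endpoints is so large (via the expansion hypothesis iterated through rotations) that it must interact with $t$ in the right way. The two stages --- ``make the path Hamilton'' and ``steer the free endpoint to $t$'' --- are logically separate, and your sketch conflates them by trying to maintain the $s$-$t$ constraint throughout.
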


\noindent The following result is the key tool which allows us to extend a given linear forest $F$ into a Hamilton cycle in a random graph, under the condition that every vertex in the random graph has many neighbours outside of $V(F)$.

% \begin{defin}
% A graph $G$ is said to be \emph{pseudorandom} if
% \begin{enumerate}
%     \item $\Delta(G) \leq 100 \delta(G)$.
%     \item Every set $S \subseteq V(G)$ has $\geq e(S) \geq $.
%     \item Every two sets $A,B \subseteq V(G)$ have $\geq e(A,B) \geq $. 
% \end{enumerate}
% \end{defin}

\begin{lemma}\label{lem:linforestextension}
Let $G\sim G(n,p)$ for $p\geq C\log n/n$ for a large enough $C$. With high probability the following holds. Suppose that $F$ is a linear forest with $V(F)\In V(G)$ such that $|N_G(v)\sm V(F)|\ge np/10^{9}$ for all $v\in V(G)$, and $|V(G)\sm V(F)|\geq n/10^5$.
Then there exists a Hamilton cycle in $G\cup F$ which covers~$F$.
\end{lemma}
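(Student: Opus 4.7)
The plan is to construct the desired Hamilton cycle in $G\cup F$ by merging the path-components of $F$ into a single cycle using vertex-disjoint connector paths routed through $U:=V(G)\setminus V(F)$. I would first condition on $G$ enjoying all the whp properties of Lemma~\ref{lem:maxdegree} and Lemma~\ref{lem:gnp properties}, and denote the path-components of $F$ by $P_1,\ldots,P_k$ with endpoint pairs $(a_i,b_i)$, setting $S:=\bigcup_i\{a_i,b_i\}$. Isolated vertices of $F$ impose no constraint on the Hamilton cycle and may be treated as belonging to $U$, so we may assume every $P_i$ has at least one edge.

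The hypothesis $|N_G(v)\setminus V(F)|\geq np/10^9$, combined with the whp minimum-degree bound $\delta(G)\geq np/2$ from Lemma~\ref{lem:maxdegree}, implies $|N_G(v)\cap S|\leq (1-1/10^{10})\,d_G(v)$ for every $v$. This puts $S$ well inside the ``sparse'' regime demanded by Theorem~\ref{Thm:VDP} under either of the $P_\alpha(m,d)$-expansion parameters from parts~(c) and~(d) of Lemma~\ref{lem:gnp properties}. In particular, we may legitimately apply Theorem~\ref{Thm:VDP} to pairs of endpoints in $S$, with internal vertices drawn from $U$.

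I would then proceed in two phases. In Phase~1, I reserve a linear-sized subset $R\subseteq U$ (of size, say, $|U|/2$) for later use; the edge-distribution and expansion properties in Lemma~\ref{lem:gnp properties} ensure both $R$ and $U\setminus R$ retain the structural features needed. Then, using the $P_\alpha(m,d)$-expansion in the graph $G[(U\setminus R)\cup S]$ and Theorem~\ref{Thm:VDP}, I find vertex-disjoint paths of bounded length connecting successive endpoint-pairs in a chosen cyclic ordering of $P_1,\dots,P_k$, thereby merging all components into a single long path $Q$ with endpoints $x,y\in S$. If $k$ exceeds the number of pairs Theorem~\ref{Thm:VDP} can handle in one call, I iterate in several rounds, using the two scales of expansion from parts~(c) and~(d) of Lemma~\ref{lem:gnp properties} to balance the number of pairs handled per round against the length of the connector paths. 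In Phase~2, the uncovered vertices are exactly $R$, and it remains to find a Hamilton path in $G[R\cup\{x,y\}]$ from $x$ to $y$. Since $R$ has linear size, the induced subgraph $G[R\cup\{x,y\}]$ inherits the expansion and edge-distribution hypotheses of Theorem~\ref{lem:hamilton-connected}; applying that theorem yields the required Hamilton path, which, concatenated with $Q$, produces the desired Hamilton cycle in $G\cup F$ covering $F$.

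The main obstacle is the bookkeeping in Phase~1. The worst case is when $F$ has close to $n/2$ short components (e.g.\ a near-perfect matching): then the number of endpoint-pairs to connect is large compared with $|U|$, and the connector paths must be very short, so one cannot afford to be wasteful with vertices from $U\setminus R$. This forces a delicate iterative scheme in which early rounds use parameters with smaller $d$ (handling many pairs with somewhat longer paths via part~(d) of Lemma~\ref{lem:gnp properties}), and later rounds switch to parameters with larger $d$ (handling fewer pairs with shorter paths via part~(c)). Throughout, one verifies that the hypotheses of Theorem~\ref{Thm:VDP} are preserved under successive vertex removals using the edge-distribution properties in parts~(a) and~(b) of Lemma~\ref{lem:gnp properties}; this is the routine but somewhat tedious part of the argument.
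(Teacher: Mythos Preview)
Your overall architecture matches the paper's, and you have correctly located the obstacle in Phase~1; the gap is that your proposed iterative scheme cannot close it. With either parameter choice from Lemma~\ref{lem:gnp properties}~(c) or~(d), the connector paths produced by Theorem~\ref{Thm:VDP} have length $2\lceil\log(m/16)/\log(d-1)\rceil+3=\Theta(\log n/\log\log\log n)$, and the number of pairs handled per call is $O(n/\log n)$. If $F$ is, say, a matching on $(1-10^{-5})n$ vertices, then $k\approx n/2$ while $|U|\approx n/10^5$. Merging $k$ paths into one requires $k-1=\Theta(n)$ connections in total; since each connection consumes $\Theta(\log n/\log\log\log n)$ fresh vertices of $U\setminus R$, the total demand is $\Theta(n\log n/\log\log\log n)$, far exceeding the $O(n)$ vertices available. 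Alternating between scales~(c) and~(d) does not help: both give connector paths of the same order of length, so no iterative scheme based solely on Theorem~\ref{Thm:VDP} can fit within the vertex budget.

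The missing idea, which the paper supplies, is a cheap pre-processing stage \emph{before} any call to Theorem~\ref{Thm:VDP}. While there are more than $\eps n$ components, property~(a) of Lemma~\ref{lem:gnp properties} yields an edge of $G$ inside the endpoint set, merging two components at zero vertex cost; then, while there are still more than $n/\log n$ components, property~(b) yields a vertex of $U$ with at least two neighbours among the current endpoints, merging two components at the cost of a single extra vertex. Only once the component count is down to $n/\log n$ does the paper invoke Theorem~\ref{Thm:VDP}, and a single application with the parameters of~(d) then suffices, consuming only $O(n/\log\log\log n)$ vertices. You currently use properties~(a) and~(b) only to certify that hypotheses survive between VDP rounds; the key is to use them directly as merging tools.
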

\begin{proof}
We assume that $G$ has the properties from \Cref{lem:gnp properties}.
Let $\eps=1/10^{10}$. Let $U=V(G)\setminus V(F)$. Split $U$ uniformly at random into three parts $U_1,U_2,U_3$, where each vertex is assigned a set independently and uniformly at random, and note that whp each set is of size at least $n/10^6$ and for all vertices $v \in V(G)$ and $i \in \{1,2,3\}$ we have that $|N_G(v) \cap U_i| \geq np/10^{10}$. This follows by a standard application of Chernoff bounds. We first do the following process: if $F$ has more than $\eps n$ components, consider a set $S \subseteq V(F)$ of endpoints which contains exactly one endpoint of each path in $F$; since $|S| > \eps n$ and by \Cref{lem:gnp properties} \ref{p:alpha joint}, there is an edge in $S$ which then can be used to update $F$ and reduce its number of components by one; then, repeat the process until $F$ has at most $\eps n$ components. 

Now, we will use $U_1$ and perform another process. At each step, we update our current linear forest $F$ to one which has one less component and one more vertex. We do this until $F$ has at most $n/\log n$ components. Indeed, at each step consider a set $S \subseteq V(F)$ of endpoints which contains exactly one endpoint of each path in $F$, and the set $U_1 \setminus V(F)$; since at each previous step at most one vertex is added to the linear forest, we have that $U_1 \setminus V(F)$ has size at least $|U_1| - \eps n \geq \eps n$ (indeed, recall that we started the process with $F$ having at most $\eps n$ components). Therefore, by \Cref{lem:gnp properties} \ref{p:edge distribution}, we have $e(S,U_1 \setminus V(F)) > |U_1 \setminus V(F)|$ and so, there is a vertex in $U_1 \setminus V(F)$ with at least two neighbours in $S$; observe that we can then add this vertex to the linear forest $F$ while also reducing its number of components by one, as desired. 

At the end of both of these processes, we have a linear forest $F'$ which covers the original linear forest $F$, such that $V(F') \subseteq V(F) \cup U_1$ and it has at most $n/ \log n$ components. We now apply Theorem~\ref{Thm:VDP} in order to transform the linear forest $F'$ into a path. For this, define the graph $G'$ to be the induced subgraph of $G$ on the vertex set $I\cup U_2$ where $I$ is the set of all endpoints of the paths in $F'$. Order the paths of $F'$ as $P_1, \ldots, P_l$ and for each $P_i$ let $a_i,b_i$ denote its endpoints. We want to connect each $b_i$ to $a_{i+1}$ with vertex disjoint paths in $G'$. For this we want to use Theorem \ref{Thm:VDP}. Indeed, we can use the fact that $U_2$ ensures that every vertex has at least $np/10^{10}$ neighbours in $U_2 \subseteq V(G')$ and \Cref{lem:gnp properties} \ref{p:alpha joint} and \ref{p:robust expansion2} to see that $G'$ satisfies the conditions of this theorem with $m=\frac{n}{\log\log n}$, $d=50\frac{\log\log n}{\log\log\log n}$, $\alpha=1-10^{11}(\log \log \log n)^{-1/8}$ and $\beta=1-1/10^{11}$. By part \ref{p:alpha joint}, we have that every two disjoint sets in $G'$ of size at least $m(d-1)/16 \geq \frac{n \log \log n}{\log n}$ have an edge between them. By part \ref{p:robust expansion2} and the fact that $G'$ has minimum degree at least $np/10^{10} \geq \delta(G)/10^{11}$, one can easily check that since $G$ satisfies property $P_{\alpha'}\left(\frac{n}{\log \log n},d \right)$ with $\alpha'$ as in \ref{p:robust expansion2}, then $G'$ must satisfy property $P_{\alpha}\left(\frac{n}{\log \log n},d \right)$ with $1-\alpha = 1-10^{11} \alpha'$. Finally, the number of paths in $F'$ is at most $n/ \log n \leq \frac{dm \log d}{15 \log m}$ and every vertex $x$ in $I$ has $|N_{G'}(x) \cap I| \leq \beta d_{G'}(x)$ since all vertices have at least $\delta(G)/10^{11}$ neighbours in $U_2$. 
Concluding, there exists a collection of $b_ia_{i+1}$-paths in $G'$ which are vertex-disjoint. Note that adding these to $F'$ produces a path $P \subseteq V(G) \setminus U_3$ which covers $F'$. 

To finish, we can apply Theorem~\ref{lem:hamilton-connected} implying that the path $P$ can be extended into a Hamilton cycle. Indeed, let $x,y$ denote the endpoints of the path $P$. Now, note that $G'=G-(V(P) \setminus \{x,y\})$ satisfies the conditions of \Cref{lem:hamilton-connected}. Indeed, using that every vertex has $np/10^{10} \geq \delta(G)/10^{11}$ neighbours in $U_3\subseteq V(G')$ and \Cref{lem:gnp properties}  \ref{p:robust expansion1}, it must be that for $d = (\log \log n)^{10^4}$ every set $S\subset V(G')$ of size at most 
\[\frac{n}{\log n\log\log n}\geq \frac{n\log\log n\log d}{d \log n\log\log\log n}\]
expands by a factor of $d$. Also, by \Cref{lem:gnp properties} \ref{p:alpha joint} we have an edge between every two disjoint sets of size at least $\frac{n\log\log n}{\log n}<\frac{n\log \log n\log d}{4130\log n\log \log \log n}$, so the conditions of \Cref{lem:hamilton-connected} are satisfied. To complete the proof, we connect $x,y$ by a Hamilton path in~$G'$.
\end{proof}

\begin{remark}
Note that, as mentioned in Section~\ref{sec:sketch}, Lemma~\ref{lem:linforestextension} can be used easily to turn an almost-optimal Hamilton packing into an almost-optimal Hamilton cover. Indeed, one can just partition the leftover graph (after packing) into few matchings. Splitting the matchings randomly into two parts, and applying Lemma~\ref{lem:linforestextension} to extend each of the obtained matchings into a Hamilton cycle, would complete the proof.
\end{remark}

\section{Proof of Theorem \ref{thm:main}}
In this section we prove the main theorem. 
% As we outlined, the proof has 3 steps and we will have a subsection for each of these 
% \vspace{0.3cm}
% \noindent \textbf{Applying the initial packing result}
% \vspace{0.3cm}
Let $G\sim G(n,p)$. We will from now on assume that $G$ is fixed and satisfies all the properties appearing in Theorem~\ref{thm:packing}, \Cref{lem:maxdegree} and Proposition~\ref{prop:bad vertices} (with $\alpha:=1/100$) and Lemma~\ref{lem:linforestextension}.
As mentioned in the proof outline, we first apply the packing result (Theorem~\ref{thm:packing}) to obtain $\lfloor \delta(G)/2\rfloor$ edge-disjoint Hamilton cycles. 
Let $L$ denote the graph consisting of the remaining edges. We will use the following parameters in the rest of the section: $t=10^4$, $\alpha=1/450$ and $ k=\lceil\Delta(L)/2\rceil$.
Our goal is to find $k$ Hamilton cycles in $G$ which cover the edges of $L$. 
Let $B$ denote the set of vertices $v$ such that $d_L(v) \geq (1- \alpha)\Delta(L)$. We note the following properties of $L,B$.
\begin{lemma}\label{lem:propafterpacking}
The following hold.
\begin{enumerate}[label=\rm{(\roman*)}]
\item $\sqrt{2pn\log n}\leq \Delta(L)\leq 4\sqrt{2pn\log n}$,
\item $|B| \leq n^{1/10}$,
%, $|N(B)| \leq n^\varepsilon np$ and $B$ is an independent set;
\item $e_G(v,B\cup N_L(B-v)) \le  \Delta(L)/10^{20}$ for all $v\in V(G)$.\label{small degree into bad set}
% \item $|N_G(v)\sm (B\cup N_L(B))|\ge np/100$ for all $v\in V(G)$.\label{outside degrees}
\end{enumerate}
\end{lemma}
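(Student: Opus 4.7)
The plan is to derive all three items directly from the packing result together with Lemma~\ref{lem:maxdegree} and Proposition~\ref{prop:bad vertices}. The key starting observation is that $L$ is obtained from $G$ by deleting $\lfloor \delta(G)/2\rfloor$ edge-disjoint Hamilton cycles, so every vertex loses exactly $M:=2\lfloor \delta(G)/2\rfloor$ edges; in particular $d_L(v)=d_G(v)-M$ for every $v$ and $\Delta(L)=\Delta(G)-M$, with $M\in\{\delta(G)-1,\delta(G)\}$.

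Item~(i) is then immediate, since $\Delta(L)\in[\Delta(G)-\delta(G),\ \Delta(G)-\delta(G)+1]$ and Lemma~\ref{lem:maxdegree} pins $\Delta(G)-\delta(G)$ between $(3/2-o(1))\sqrt{2pn\log n}$ and $4\sqrt{2pn\log n}$, which for large $n$ yields the stated bounds $\sqrt{2pn\log n}\le\Delta(L)\le 4\sqrt{2pn\log n}$.

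For item~(ii) I will show that our set $B$ is contained in the set $B^\ast$ produced by Proposition~\ref{prop:bad vertices} when its internal parameter is chosen to be $1/100$; since $|B^\ast|\le n^{1/10}$ by the proposition, this already gives $|B|\le n^{1/10}$. Indeed, $v\in B$ means $d_G(v)-M\ge(1-\alpha)(\Delta(G)-M)$, which rearranges to $d_G(v)\ge(1-\alpha)\Delta(G)+\alpha M\ge(1-\alpha)\Delta(G)+\alpha(\delta(G)-1)$. Substituting the bounds from Lemma~\ref{lem:maxdegree} gives $d_G(v)\ge pn+(1-3\alpha-o(1))\sqrt{2pn\log n}$, and with $\alpha=1/450$ this comfortably beats the threshold $pn+(1-1/100)\sqrt{2pn\log n}$ that defines $B^\ast$.

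Item~(iii) then drops out: from $B\subseteq B^\ast$ and $L\subseteq G$ we obtain $B\cup N_L(B-v)\subseteq B^\ast\cup N_G(B^\ast-v)$, so part~2 of Proposition~\ref{prop:bad vertices} yields $e_G(v,B\cup N_L(B-v))\le 100\le\sqrt{2pn\log n}/10^{20}\le\Delta(L)/10^{20}$, where the last inequality uses item~(i). The only step requiring any care is the arithmetic in item~(ii): the value $\alpha=1/450$ fixed at the start of the section is chosen precisely so that the $3\alpha$-loss incurred in translating between $\Delta(L)$ and $\Delta(G)$ remains well inside the $1/100$ margin available in Proposition~\ref{prop:bad vertices}.
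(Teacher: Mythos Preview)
Your proof is correct and follows essentially the same approach as the paper's: both derive all three items directly from Lemma~\ref{lem:maxdegree} and Proposition~\ref{prop:bad vertices}, via the identity $d_L(v)=d_G(v)-2\lfloor\delta(G)/2\rfloor$ and the inclusion $B\subseteq B^\ast$. The only cosmetic difference is that the paper rewrites the condition $d_L(v)\ge(1-\alpha)\Delta(L)$ as $d_G(v)\ge\Delta(G)-\alpha\Delta(L)$ and then uses the upper bound $\Delta(L)\le 4\sqrt{2pn\log n}$ from item~(i) (incurring a $4\alpha$ loss rather than your $3\alpha$), but since $\alpha=1/450$ both comfortably clear the $1/100$ threshold.
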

\begin{proof}
These follow directly from \Cref{lem:maxdegree} and Proposition~\ref{prop:bad vertices}, noting that $d_L(v)\geq (1-\alpha)\Delta(L)$ implies $d_G(v)\geq \Delta(G)-\alpha\Delta(L)\geq np+(1-\frac{1}{100})\sqrt{2pn\log n}$.
\end{proof}

\subsection{Initial covering of $L$ by linear forests}

\noindent\textbf{Covering the edges of $L-B$ by $(1-\alpha/2)k$ linear forests}

\vspace{0.3cm}

\noindent 
%Now, firstly note the by definition of $B$, the graph $L-B$ has maximum degree at most $(1-\alpha)\Delta(L)$ and therefore, Theorem \ref{thm:approxLAC} implies that there is a collection $\mathcal{F}_1$ of $(1-\alpha + o(1))\Delta(L)$ linear forests covering the graph $L-B$.
The following is a simple lemma which enables us to split the edges of $L-B$ into a constant number of graphs $L_i$, such that each of them has roughly the same maximum degree, and such that each $v\in V(L_i)$ has many neighbours in $V(G)\sm V(L_i)$.
\begin{lemma}\label{lem:linear forests with reservoirs}
There exists a partition of the edges of $L-B$ into subgraphs $L_1,\dots,L_t$ with
\begin{align}
\Delta(L_i)\le (1-3\alpha/4)\frac{\Delta(L)}{t-2} \label{partition max deg}
\end{align}
for each $i\in [t]$; further, denoting $R_i=V(G)\sm (B\cup V(L_i))$, we have for every vertex $v\in V(G)$ that $e_G(v,R_i\sm N_L(B-v))\geq np/200t$ and $|R_i|\geq n/2t$. 
% \begin{align}
% |N_G(v)\sm (B\cup N_L(B)\cup V(L_i)|\ge np/200t \label{partition degrees}
% \end{align}
% for each $i\in [t]$ and all $v\in V(G)$.
\end{lemma}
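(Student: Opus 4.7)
The plan is to build the partition $L_1,\dots,L_t$ via a two-stage random process. First, assign each vertex of $V(G)\setminus B$ independently and uniformly at random to one of $t$ reservoirs $R_1,\dots,R_t$, and write $i(v)$ for the index of $v$'s reservoir. Second, independently for each edge $e=uv$ of $L-B$ (conditional on the reservoirs), place $e$ into $L_i$ for an index $i$ chosen uniformly from the set of \emph{valid} indices $\{i\in[t]:u\notin R_i,\,v\notin R_i\}$. Since each vertex lies in exactly one reservoir, every edge has at least $t-2$ valid indices. By construction, no edge of $L_i$ meets $R_i$, so $R_i\subseteq V(G)\setminus(B\cup V(L_i))$; these $R_i$ will serve as the reservoirs required by the lemma.

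I claim that with high probability all three desired properties hold simultaneously. For the size of $R_i$: each vertex of $V(G)\setminus B$ lies in $R_i$ with probability $1/t$, so Chernoff gives $|R_i|\geq n/2t$ for every $i$ whp. For the neighbour count: the number of $G$-neighbours of a fixed $v$ lying in $R_i$ is a sum of independent Bernoullis with mean $d_G(v)/t\geq np/(2t)$, so Chernoff together with a union bound over $v$ and $i$ yields $e_G(v,R_i)\geq np/(3t)$ whp; since Lemma~\ref{lem:propafterpacking}(iii) bounds the number of $G$-neighbours of $v$ in $N_L(B-v)$ by $\Delta(L)/10^{20}\ll np/t$, we deduce $e_G(v,R_i\setminus N_L(B-v))\geq np/200t$, as required.

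The heart of the argument is the maximum-degree bound $\Delta(L_i)\leq (1-3\alpha/4)\Delta(L)/(t-2)$. Conditional on the reservoirs, and for $i\neq i(v)$, $d_{L_i}(v)$ is a sum of independent Bernoullis, one per neighbour $u$ of $v$ in $L-B$ with $i(u)\neq i$, each with success probability at most $1/(t-2)$. Because $v\notin B$ forces $d_L(v)\leq (1-\alpha)\Delta(L)$, the conditional mean is at most $(1-\alpha)\Delta(L)/(t-2)$, leaving a slack of exactly $\alpha\Delta(L)/(4(t-2))$ below the target. A Chernoff bound then gives failure probability at most $\exp(-c\alpha^2\Delta(L)/t)$ for an absolute constant $c>0$; and since $\Delta(L)\geq\sqrt{2C}\log n$ by Lemma~\ref{lem:propafterpacking}(i), choosing $C$ large in terms of $\alpha$ and $t$ makes this $o(1/n^2)$, surviving a union bound over $v\in V(G)$ and $i\in[t]$. (For $i=i(v)$ one has $d_{L_i}(v)=0$ trivially.)

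The main obstacle is the concentration in the third step: the denominator $t-2$ rather than $t$ is precisely the number of valid placements per edge, so the conditional mean of $d_{L_i}(v)$ already has the right denominator, and exploiting $d_L(v)\leq (1-\alpha)\Delta(L)$ for $v\notin B$ provides just enough slack above the mean for Chernoff to close the gap, provided $C$ is chosen sufficiently large.
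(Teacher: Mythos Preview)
Your proof is correct and essentially identical to the paper's. Both arguments partition $V(G)\setminus B$ uniformly at random into $t$ reservoirs, then colour each edge of $L-B$ uniformly among the indices avoiding the reservoirs of its endpoints, and verify all three properties via Chernoff bounds; the key observation that $d_{L-B}(v)\le (1-\alpha)\Delta(L)$ for $v\notin B$ provides exactly the slack $\alpha\Delta(L)/(4(t-2))$ needed, with $\Delta(L)\ge \sqrt{2C}\log n$ and $C$ large absorbing the constants. One cosmetic point: the mean of $e_G(v,R_i)$ is $|N_G(v)\setminus B|/t$ rather than $d_G(v)/t$, but since $|N_G(v)\cap B|\le \Delta(L)/10^{20}$ by Lemma~\ref{lem:propafterpacking}\ref{small degree into bad set} this makes no difference.
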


\begin{proof}
% Partition $V(G)$ randomly into sets $U_1,\dots,U_t$. For each $i$, partition $G[U_i]$ randomly into $t-1$ parts and label each of them by a unique element from $[t]\sm\Set{i}$. For each pair $1\le i<j\le t$, partition $G[U_i,U_j]$ randomly into $t-2$ parts and label each of them by a uniqe element in $[t]\sm\Set{i,j}$. For each $i$, let $L_i$ be the subgraph consisting of all the parts which are labeled with $i$, with the vertices of $B$ removed.
Partition $V(L)$ randomly into sets $R_1',\dots,R_t'$, where each vertex is assigned a part independently and uniformly at random. Define $R_i$ to be the set $R_i'\setminus B$.
Label each edge in each $L[R_i]$ uniformly at random with one of the colors in $[t]\setminus\{i\}$. For each pair $1\le i<j\le t$, label each edge in $L[R_i,R_j]$ uniformly at random with an element in $[t]\sm\Set{i,j}$. Let $L_i$ be the graph on $V(L)\setminus (B\cup R_i)$ which contains all the edges labeled by $i$.

Now, the required bounds simply follow from standard concentration inequalities. Indeed, note first that for each $i$ and vertex $v \in V(L)$, we have that the edges in $L$ incident to $v$ are independently colored with color $i$, each with probability at most $\frac{1}{t-2}$, hence by using Chernoff bounds we have that $d_{L_i}(v)\leq (1-3\alpha/4)\Delta(L)/(t-2)$ with probability at least $1-\text{exp}(-\Delta(L)/10^{9}t)$. Since $\Delta(L)\geq \sqrt{2pn \log n} \geq \sqrt{C}\log n$ and since $t=10^4 < \sqrt{C}/10^{10}$, this probability is at least $1-o(1/tn)$ and thus, a union bound over all $i$ and $v$ implies that with high probability, $\Delta(L_i)\le (1-3\alpha/4)\frac{\Delta(L)}{t-2}$ for each~$i$.

Since $R_i'$ is a random subset of $V(L)$ where each vertex is included with probability $1/t$, the expected size of $|R_i'|$ is $n/t$. Hence,
by a standard Chernoff bound we have that $|R_i'|\geq 2n/3t$ whp. Hence, since $|B|\leq n^{1/10}$, we have $|R_i|=|R_i'\setminus B|\geq n/2t$ whp.

 Finally, we want to show that whp $e_G(v,R_i\sm N_L(B-v))\geq np/200t$ for all $i\in[t]$. First recall that $\delta(G)\geq np/2$ by Lemma~\ref{lem:degrees in gnp}. We then have that $\mathbb{E}[e_G(v,R_i')]\geq np/2t\geq 10\log n$, hence whp $e_G(v,R_i')\geq np/4t$ for all vertices $v$ and $i$, again using a Chernoff bound, and a union bound over at most $n$ vertices, and over all $i\in[t]$. Now, by Lemma~\ref{lem:propafterpacking}~\ref{small degree into bad set} we have that $e_G(v, B\cup N_L(B-v))$ is at most $\Delta(L)/10^{20}<np/10t$. Since
 $R_i=R_i'\setminus B$, we have that with high probability, $e_G(v,R_i\sm N_L(B-v))\geq np/200t$ for all $v$ and $i$, as desired.
\end{proof}

%There exists a partition of $V(G)$ into sets $U_1,\dots,U_t$ such that $d_G(v,U_i)\approx np/t$ for all $v\in V(G)$ and $i\in[t]$.

%Goal for extended version: There exists a partition of $L-B$ into subgraphs $L_1,\dots,L_t$ and a collection $\cF_0$ of $k:=\lceil \frac{\Delta}{2}\rceil$ linear forests which partition $L_0:=L[B\cup N_L(B)]$ with the following properties:
%\begin{align}
%\Delta(L_i)\le (1-\alpha)\frac{\Delta}{t-1} \mbox{ for each }i\in [t] \\
%\mbox{for any $i\in [t]$ and $F\in \cF_0$, we have that $|N_G(v)\sm (V(F)\cup V(L_i)|\ge np/2t$}
%\end{align}

\noindent Now, for each $i\in[t]$, Theorem~\ref{thm:approxLAC} applied to $L_i$ gives us a collection of at most $(1+\eps) \frac{\Delta(L_i)}{2} $ linear forests which cover~$L_i$, for any choice of a constant $\varepsilon>0$.

\begin{defin}\label{def:F_0}
     For each $i\in[t]$ (recall $t=10^4$), define a collection $\cF_i$ of $(1-2\alpha/3)\frac{\Delta(L)}{2(t-2)}$ linear forests, as given by Theorem~\ref{thm:approxLAC}, which cover~$L_i$. We denote by $\mathcal{F}_0$ the union of these collections. Note that $ |\mathcal{F}_0|=t\cdot (1-2\alpha/3)\frac{\Delta(L)}{2(t-2)}\leq (1-\alpha/2)k$, so we can assume that $\mathcal F_0$ has precisely $(1-\alpha/2)k$ linear forests by possibly adding forests to $\mathcal{F}_0$ which consist of any single vertex $v\notin B$.
\end{defin}

\noindent\textbf{Covering the rest of $L$ by $k$ linear forests}
\vspace{0.3cm}

\noindent We will now cover the rest of the edges in $L$, that is, those edges incident with $B$. 
\begin{lemma}
There exists a collection $\mathcal{F}_1$ of $k$ linear forests which decompose the edges of the subgraph of $L$ consisting of the edges touching $B$.  
\end{lemma}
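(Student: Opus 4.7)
The plan is to apply Lemma~\ref{lem:easylinarb} directly to the relevant subgraph. Write $L_B$ for the subgraph of $L$ consisting of all edges of $L$ incident to $B$, so $V(L_B) = B \cup N_L(B)$ and we need to decompose its edges into exactly $k$ linear forests. I would use the vertex partition $V(L_B) = A \cup B$ with $A := V(L_B)\setminus B = N_L(B)\setminus B$. The set $A$ is automatically independent in $L_B$: by construction every edge of $L_B$ has at least one endpoint in $B$, so no edge of $L_B$ lies entirely inside $A$.

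The next step is to identify $\Delta(L_B)$ with $\Delta(L)$, so that the target number of forests matches. Any vertex $v$ attaining $d_L(v) = \Delta(L)$ satisfies $d_L(v) \geq (1-\alpha)\Delta(L)$ and hence belongs to $B$; in particular all of its $L$-edges touch $B$ and therefore survive in $L_B$, giving $d_{L_B}(v) = \Delta(L)$. Consequently $\lceil \Delta(L_B)/2\rceil = \lceil \Delta(L)/2\rceil = k$.

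It only remains to verify the degree condition ``into $B$'' required by Lemma~\ref{lem:easylinarb}. For every $v \in V(L_B)$, using $L \subseteq G$ together with Lemma~\ref{lem:propafterpacking}(iii),
$$d_{L_B}(v,B) \;\leq\; d_L(v,B) \;\leq\; e_G\bigl(v,\, B \cup N_L(B-v)\bigr) \;\leq\; \Delta(L)/10^{20} \;\leq\; \Delta(L_B)/100.$$
With all hypotheses of Lemma~\ref{lem:easylinarb} verified, applying it to $L_B$ with the partition $(A,B)$ yields a decomposition of $L_B$ into exactly $\lceil \Delta(L_B)/2 \rceil = k$ linear forests, which is the desired collection $\mathcal{F}_1$. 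There is no real obstacle in this step: the set $B$ was defined and Lemma~\ref{lem:propafterpacking} was proved precisely so that Lemma~\ref{lem:easylinarb} could be applied off the shelf, and the only nontrivial input is the neighbourhood bound from Lemma~\ref{lem:propafterpacking}(iii).
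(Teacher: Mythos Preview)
Your proof is correct and follows the same approach as the paper: apply Lemma~\ref{lem:easylinarb} to $L_B$ with the partition $(A,B)$, using Lemma~\ref{lem:propafterpacking}\ref{small degree into bad set} for the degree condition into~$B$. You are in fact slightly more careful than the paper in explicitly verifying that $\Delta(L_B)=\Delta(L)$ (so that the output is exactly $k$ forests), a point the paper leaves implicit.
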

\begin{proof}
Given the properties in Lemma \ref{lem:propafterpacking}, we need only to apply Lemma \ref{lem:easylinarb} to this subgraph of $L$, which we denote as $L_B$. Indeed, take $A:=V(L)\sm B$ and $B:= B$ and note that $A$ is independent in $L_B$, while the degree into $B$ is at most $\Delta(L)/1000$ by Lemma~\ref{lem:propafterpacking}~\ref{small degree into bad set}. This gives a collection $\mathcal{F}_1$ of $k$ linear forests which decompose the edges of $L_B$ as desired. 
\end{proof}
\subsection{Joining $\mathcal{F}_0$ to $\mathcal{F}_1$}

\noindent We will now merge $\mathcal{F}_0$ with $\mathcal{F}_1$ in order to create one collection $\mathcal{F}$ of $k$ linear forests covering the edges of $L$, such that each vertex in each forest has a large neighbourhood outside of that forest in~$G$. In the end, we will transform each of these linear forests into a Hamilton cycle using Lemma \ref{lem:linforestextension}, thus finishing the proof. The construction of $\mathcal{F}$ proceeds as follows.

First, recall that by Definition~\ref{def:F_0}, we have $|\mathcal{F}_0| = (1-\alpha/2)k$. Take a subcollection $\mathcal{F}'_1 \subseteq \mathcal{F}_1$ of size $|\mathcal{F}_0|$ and take an arbitrary pairing $(F_0,F_1)$ between $\mathcal{F}_0$ and $\mathcal{F}'_1$. For each pair $(F_0,F_1)$, we can define the linear forest $F_2 := (F_0 \cup F_1) \setminus E$, where $E$ is the set of edges in $F_0$ which are incident to vertices also incident to some edge of $F_1$. We now add all these new linear forests $F_2$ to $\mathcal{F}$, so that currently, $|\mathcal{F}| = |\mathcal{F}_0| = (1-\alpha/2)k$. Let $L' \subseteq L$ denote the subgraph of $L$ formed by the edges in $L-B$ which do not belong to any linear forest $F_2 \in \mathcal{F}$, that is, those which were in some set $E$ as defined before. Then, the following holds.
\begin{claim}
$\Delta(L') \leq \Delta(L)/10^{19}$.
\end{claim}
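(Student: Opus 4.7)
The plan is to trace why each edge of $L'$ appears in $L'$ and then count such edges at a fixed vertex. Recall that since $\mathcal{F}_0$ decomposes $L-B$, every edge $vw \in L - B$ lies in a unique $F_0(vw) \in \mathcal{F}_0$, which is paired with a unique $F_1(vw) \in \mathcal{F}_1'$. By construction, the edge $vw$ is in $L'$ exactly when $vw$ was removed to form $F_2$, i.e.\ when at least one of $v,w$ is incident to some edge of $F_1(vw)$. Crucially, every edge of $\mathcal{F}_1$ lies in $L_B$ and so touches $B$, hence a vertex $x \notin B$ is incident to an edge of $F_1$ if and only if $x$ has a neighbour in $B$ through that particular edge.

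Fix $v \in V(L') \subseteq V(L) \setminus B$; I would split the edges of $L'$ at $v$ into a ``$v$-group'' (those $vw$ for which $v$ is incident to an edge of $F_1(vw)$) and a ``$w$-group'' (the remaining ones, where the other endpoint $w$ is incident to an edge of $F_1(vw)$). For the $v$-group, the number of forests $F_1 \in \mathcal{F}_1$ in which $v$ is incident to an edge is at most the number of $L$-edges from $v$ to $B$, namely $|N_L(v) \cap B|$, which Lemma~\ref{lem:propafterpacking}~\ref{small degree into bad set} bounds by $\Delta(L)/10^{20}$. Each such $F_1$ has a unique paired $F_0 \in \mathcal{F}_0$, and $v$ lies on at most $2$ edges of the linear forest $F_0$, so the $v$-group contributes at most $2\,\Delta(L)/10^{20}$ edges at $v$.

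For the $w$-group, every edge $vw \in L'$ satisfies $w \in N_L(B)$; since $v \notin B$ we have $N_L(B) = N_L(B-v)$, so the number of such $w$ is bounded by $|N_G(v) \cap N_L(B-v)| = e_G(v, N_L(B-v)) \leq e_G(v, B \cup N_L(B-v)) \leq \Delta(L)/10^{20}$, again by Lemma~\ref{lem:propafterpacking}~\ref{small degree into bad set}. Summing the two contributions gives $d_{L'}(v) \leq 3\,\Delta(L)/10^{20} < \Delta(L)/10^{19}$, which proves the claim. The only real subtlety is the case split: one has to notice that because the $F_0$--$F_1$ pairing is a bijection, counting pairs in the $v$-group reduces to counting forests $F_1$ in which $v$ is active, at which point Lemma~\ref{lem:propafterpacking}~\ref{small degree into bad set} applies cleanly to both groups.
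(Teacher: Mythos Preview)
Your proof is correct and follows essentially the same argument as the paper's: you perform the same case split according to which endpoint of a deleted edge is incident to the paired forest $F_1$, bound the ``$v$-group'' by $2e_L(v,B)$ via the bijection between $\mathcal{F}_0$ and $\mathcal{F}_1'$, bound the ``$w$-group'' by $e_L(v,N_L(B))$, and conclude via Lemma~\ref{lem:propafterpacking}~\ref{small degree into bad set}. The paper's write-up is slightly terser but the logic is identical.
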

\claimproof
Let us fix a vertex $x \notin B$. Suppose an edge $xy$ belongs to $L'$, that is, there is a pair $(F_0,F_1)$ such that $xy \in E$. Then, it must be that one of $x,y$ belongs to $V(F_1) \setminus B$, and thus, to $N_L(B)$. The number of such vertices $y$ with $y \in V(F_1) \setminus B$ is at most $e_L(x,N_L(B))$. If $x \in V(F_1) \setminus B$, then there are at most $2e_L(x,B)$ possible options for $y$ - this is because choosing a neighbour of $x$ in $B$ will fix the linear forest $F'_1$ which then gives two options for $y$ with $xy \in F_0$. Hence $\Delta(L')\leq \max_{x} 3e_L(x,B\cup N_L(B))\leq \Delta(L)/10^{19}$, using property \ref{small degree into bad set} of Lemma~\ref{lem:propafterpacking}.
\qed
\\\\
\noindent Let $L'_i := L' \cap L_i$ for each $1 \leq i \leq t$. We will now add linear forests to $\mathcal{F}$ which cover the edges in $L'$ as well as the edges used in $\mathcal{F}''_1 := \mathcal{F}_1 \setminus \mathcal{F}'_1$. We do this so that at the end we have $|\mathcal{F}| \leq k$ as desired. Let us partition $\mathcal{F}''_1$ arbitrarily into $t$ collections $\mathcal{F}''_1(1), \ldots, \mathcal{F}''_1(t)$ each of size at least $\alpha k/2t >\Delta(L)/10^8\geq  100 \Delta(L')$. For each $1 \leq i \leq t$, we can then apply Lemma \ref{lem:merginglinfor} with $H:= L'_i$, $G := \bigcup_{F \in \mathcal{F}''_1(i)} F$. Indeed, note that a vertex $v \in V(H)$ does not belong to $B$; therefore, its degree in $H$ is at most $\Delta(L') \leq |\mathcal{F}''_1(i)|/100$ and its degree in $G$ is at most $e_L(v,B) \leq \Delta(L)/10^{20} < |\mathcal{F}''_1(i)|/10^{12}$ using property \ref{small degree into bad set} of Lemma~\ref{lem:propafterpacking}. So, Lemma \ref{lem:merginglinfor} finds us a collection $\mathcal{F}(i)$ of $|\mathcal{F}''_1(i)|$ linear forests covering the edges of $L'_i$ and those used in $\mathcal{F}''_1(i)$. To conclude, we add all the collections $\mathcal{F}(i)$ to the collection $\mathcal{F}$, so that $|\mathcal{F}| \leq k$ and the linear forests in it cover all edges of $L$. 

\begin{claim}\label{lem:forests are good}
    Every $F \in \mathcal{F}$ satisfies the conditions of Lemma \ref{lem:linforestextension}, i.e., $|V(G) \setminus V(F)|\geq n/10^5$ and $|N_G(v) \setminus V(F)| \ge np/10^9$ for all $v \in V(G)$.
\end{claim}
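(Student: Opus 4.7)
\claimproof
My plan is to show that for every $F\in\mathcal F$ there is an index $i\in[t]$ with $V(G)\setminus V(F)\supseteq R_i\setminus N_L(B)$, and then to read off both required bounds from the size of $R_i$ and the neighbourhood estimate guaranteed by Lemma~\ref{lem:linear forests with reservoirs}.

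Recall that the forests in $\mathcal F$ come in two types: Type~I, the merged forests $F_2=(F_0\cup F_1)\setminus E$ produced by the pairing of $\mathcal F_0$ with $\mathcal F'_1$; and Type~II, the forests in some $\mathcal F(i)$ produced by Lemma~\ref{lem:merginglinfor} from $\mathcal F''_1(i)$ and $L'_i$. To each $F$ I would associate an index $i\in[t]$: for Type~I take the $i$ with $F_0\in\mathcal F_i$, for Type~II take the $i$ with $F\in\mathcal F(i)$. Combining $V(L_i)\cap R_i=\emptyset$ with $V(L'_i)\subseteq V(L_i)$ and the fact that every $F'\in\mathcal F_1$ only covers edges of $L_B$, so $V(F')\subseteq B\cup N_L(B)$, one obtains
\[
V(F)\;\subseteq\; V(L_i)\cup B\cup N_L(B)\;=\;(V(G)\setminus R_i)\cup N_L(B).
\]
(The filler single-vertex forests added to $\mathcal F_0$ contribute at most one extra isolated vertex to $V(F)$; I would absorb this $-1$ loss into the slack of the final bounds.) Hence $V(G)\setminus V(F)\supseteq R_i\setminus N_L(B)$, up to at most one vertex.

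For the first condition, I would use $|R_i|\ge n/(2t)$ from Lemma~\ref{lem:linear forests with reservoirs} together with $|N_L(B)|\le |B|\cdot\Delta(L)\le n^{1/10}\cdot 4\sqrt{2pn\log n}=o(n)$, which follows from Lemma~\ref{lem:propafterpacking} and the assumption $p\le n^{-2/3}$; since $t=10^4$, this yields $|V(G)\setminus V(F)|\ge n/10^5$. For the neighbour bound, $|N_G(v)\setminus V(F)|\ge e_G(v,R_i\setminus N_L(B))$ for every $v\in V(G)$. When $v\notin B$ one has $N_L(B-v)=N_L(B)$, so Lemma~\ref{lem:linear forests with reservoirs} directly yields $e_G(v,R_i\setminus N_L(B))\ge np/(200t)\gg np/10^9$.

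The only mild obstacle is the case $v\in B$, where $N_L(B-v)$ may differ from $N_L(B)$ by the ``private'' neighbours of $v$. Here I would use the containment $N_L(B)\subseteq N_L(B-v)\cup N_L(\{v\})$ to write
\[
e_G(v,R_i\setminus N_L(B))\;\ge\; e_G(v,R_i\setminus N_L(B-v))-|N_L(\{v\})|\;\ge\;\frac{np}{200t}-\Delta(L).
\]
Since $\Delta(L)\le 4\sqrt{2pn\log n}\ll np/(200t)$ whenever $p\ge C\log n/n$ with $C$ sufficiently large, the bound $np/10^9$ holds in this case as well, completing the verification of both conditions of Lemma~\ref{lem:linforestextension}.
\endclaimproof
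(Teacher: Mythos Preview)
Your proof is correct and follows the same approach as the paper: associate to each $F\in\mathcal F$ an index $i$ so that $V(F)\subseteq V(L_i)\cup B\cup N_L(B)$, and then invoke the reservoir bounds from Lemma~\ref{lem:linear forests with reservoirs}. In fact your treatment of the case $v\in B$ (distinguishing $N_L(B)$ from $N_L(B-v)$ and absorbing the defect $|N_L(\{v\})|\le\Delta(L)$ into the slack) is more careful than the paper's own argument, which tacitly writes $|N_G(v)\setminus V(F)|\ge e_G(v,R_i\setminus N_L(B-v))$ without justifying this containment for $v\in B$.
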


\claimproof   
First, recall that each $F \in \mathcal{F}$ is such that $F[V(G) \setminus B] \subseteq L_i$ for some $i$, so that $R_i\sm N_L(B-v) \subseteq V(G) \setminus V(F)$. Further, by Lemma \ref{lem:linear forests with reservoirs} we have that $|R_i|\geq n/2t$, by Proposition~\ref{prop:bad vertices} we know that $|B|\leq n^{1/10}$, and by  Lemma~\ref{lem:maxdegree} we have $\Delta(G)\leq 2np$.
Combining these three, and recalling that $p\leq n^{-2/3}$, we get that $$|V(G) \setminus V(F)| \geq |R_i\sm N_L(B-v)| \geq n/2t-2np|B|\geq n/2t-o(n)\geq n/10^5.$$ For the second part, we can similarly apply Lemma \ref{lem:linear forests with reservoirs} since it tells us that for all vertices $v$ we have $|N_G(v) \setminus V(F)| \geq e_G(v,R_i\sm N_L(B-v))\geq np/200t \geq np/10^7$, as desired.
\qed

\subsection{Extending $\mathcal{F}$ into a Hamilton cycle cover}

\noindent To finish the proof, we can take the collection $\mathcal{F}$ of at most $k$ linear forests covering the edges of $L$. Claim~\ref{lem:forests are good} shows that each linear forest $F\in \mathcal{F}$ satisfies the conditions of Lemma \ref{lem:linforestextension}. So we can apply this lemma in order to extend each of these linear forests to a Hamilton cycle in $G$. This together with the covering of $G-L$ as discussed in the beginning of the proof gives a covering of $G$ with $\lceil \Delta(G)/2 \rceil$ Hamilton cycles as desired.

\section{Concluding remarks}
In this paper we showed that a random graph $G\sim G(n,p)$ can, with high probability, be covered by $\lceil\Delta(G)/2\rceil$ Hamilton cycles for all $p \geq C\log n/n$ for some large constant $C$, thus proving a conjecture of Glebov, Krivelevich and Szab\'o \cite{GKS:14} in a strong form and improving upon the previous results of Hefetz, K\"uhn, Lapinskas and Osthus~\cite{HKLO:14} and of Ferber, Kronenberg and Long \cite{ferber2017packing}. An interesting direction of research would then be to consider different spanning structures, such as spanning trees or $H$-factors for a fixed graph $H$. Some work on this topic has already been done by Bal, Frieze, Krivelevich and Loh \cite{bal2013packing}.

Another interesting direction concerns the directed versions of the packing and covering problems. For example, does our result extend to random digraphs? It was shown by Ferber, Kronenberg and Long \cite{ferber2017packing} that an asymptotically optimal covering of a random digraph $D(n,p)$ with directed Hamilton cycles exists whenever $p\gg \frac{\log^2n}{n}$. It would be interesting to see if the range of $p$ can be improved, as well as if an optimal covering can be found. Many of the ideas discussed here are also applicable to the directed context, and it would be worthwhile to explore the adaptation of our methods in this case.

Finally, it is well-known that $G(n,p)$ is already Hamiltonian when $p\geq (1+\varepsilon)\log n/n$, hence it is natural to ask whether the covering result also holds for this range of $p$. It would also be interesting to consider a hitting time version, since we know that in the random graph process, the graph is Hamiltonian as soon as it has minimum degree~$2$. This problem was first formulated in~\cite{HKLO:14}.

\bibliographystyle{plain}

\end{document}